\theoremstyle{definition}
\newtheorem{definition}{Definition}
\newtheorem{remark}{Remark}
\theoremstyle{plain}
\newtheorem{theorem}{Theorem}
\newtheorem{lemma}[definition]{Lemma}
\newtheorem{corollary}{Corollary}
\theoremstyle{remark}
\newcommand{\C}{\mathbb{C}}
\newcommand{\N}{\mathbb{N}}
\newcommand{\Q}{\mathbb{Q}}
\newcommand{\R}{\mathbb{R}}
\newcommand{\Z}{\mathbb{Z}}
\newcommand{\Ccal}{\mathcal{C}}
\newcommand{\Hcal}{\mathcal{H}}
\newcommand{\Mcal}{\mathcal{M}}
\newcommand{\Tcal}{\mathcal{T}}
\newcommand{\diag}{\operatorname{diag}\,}
\newcommand{\SL}{\operatorname{SL}}
\let\leq\leqslant
\let\geq\geqslant
\begin{document}

\begin{center}
\begin{huge}
\begin{spacing}{1.0}
\textbf{Eisenstein series for $O(2, n+2)$}  
\end{spacing}
\end{huge}

\bigskip
by
\bigskip

\begin{large}
\textbf{Aloys Krieg\footnote{Aloys Krieg, Lehrstuhl A für Mathematik, RWTH Aachen University, D-52056 Aachen, krieg@rwth-aachen.de}}, \textbf{Hannah Römer\footnote{Hannah Römer, Dep. of Economics, University of Oxford, Manor Road Building, Manor Road, OX13UQ, United Kingdom, hannah.roemer@economics.ox.ax.uk}} and
\textbf{Felix Schaps\footnote{Felix Schaps, Lehrstuhl A für Mathematik, RWTH Aachen University, D-52056 Aachen, felix.schaps@matha.rwth-aachen.de}}
\end{large}
\vspace{0.5cm}\\
July 2023
\vspace{1cm}
\end{center}
\begin{abstract}
\textbf{Abstract.} We will characterize the Eisenstein series for $O(2,n+2)$ as a particular Hecke eigenform. As an application we show that it belongs to the associated Maaß space. If the underlying lattice is even and unimodular, this leads to an explicit formula of the Fourier coefficients.
\end{abstract}
\noindent\textbf{Keywords:} Orthogonal group, discriminant kernel, Eisenstein series, Hecke operator, Maaß space, Fourier coefficient.  \\[1ex]
\noindent\textbf{Classification MSC 2020: 11F55}
\vspace{2ex}\\

\newpage
\section{Introduction}

The orthogonal group $O(2, n+2)$ is associated with a Siegel domain of type IV. The attached spaces of modular forms have attracted a lot of attention, mainly by the additive lifts of Gritsenko [6] as well as the product expansions of Borcherds [1]. A lot of concrete examples were constructed by Wang and Williams [21], [22], [23].
We consider the classical Eisenstein series of Siegel type and show that it can be characterized as an eigenform under particular Hecke operators. Then we show that the associated Maaß space is also invariant under these Hecke operators. This allows us to demonstrate that the Eisenstein series belongs to the Maaß space. The proof generalizes the procedure for the paramodular group, i.e. $n = 1$ in our setting, in [11]. If we deal with an even unimodular lattice, the Fourier expansion can be calculated explicitly this way.

\section{Cusp forms}
\label{sect_2}

Throughout this paper we will assume 
\begin{gather*}\tag{1}\label{gl_1}
\begin{cases}
  & L=\Z ^n,  S\in\Z^{n\times n} \; \text{is positive definite and maximal even},\\
  & L_0 = \Z^{n+2}, \; S_0=\begin{pmatrix}
  0 & 0 & 1 \\ 0 & -S & 0 \\ 1 & 0 & 0
  \end{pmatrix}, \\
  & L_1=\Z ^{n+4}, \; S_1=\begin{pmatrix}
  0 & 0 & 1 \\ 0 & S_0 & 0 \\ 1 & 0 & 0
\end{pmatrix}\\
 \end{cases}
\end{gather*}
just as in [17].
Thus $S_1$ hat got the signature $(2, n+2)$. For matrices $A, B$ of appropriate size, we use the abbreviation
\[
A[B]:= B^{tr} AB.
\]
Consider the \emph{orthogonal group}\index{orthogonal group}
\[
O (T; R ):=\left\{ M\in R ^{n\times n}; T [M]= T\right\},
\]
holds for a symmetric non-degenerate $n\times n$ matrix $T$ and a subring $R$ of $\R$ as well as
\[
SO (T; R):= O(T; R)\cap\SL _n (R).
\]
Let $\Hcal$ denote the upper half-plane in $\C$ and $\Hcal _S$ the attached \emph{orthogonal half-space}\index{orthogonal half-space}
\[
\Hcal _S :=\left\{ w=\begin{pmatrix}
\tau \\ z \\ \tau '
\end{pmatrix} = u+iv\in\C ^{n+2}; \; S_0 [v]>0, \; \tau ,\; \tau '\in\Hcal\right\}.
\]
$O (S_1; \R )$ acts on $\Hcal _S\cup (-\Hcal _S)$ via
\begin{align*}
  & M = \begin{pmatrix}
  \alpha & a^{tr} & \beta \\ b & K & c \\ \gamma & d^{tr} & \delta
  \end{pmatrix}\in O (S_1; \R ), \; a,b,c,d\in\R ^{n+2}, \alpha ,\beta ,\gamma ,\delta\in\R ,\\
   & w \mapsto M\langle w\rangle := \left( -\frac{1}{2}S_0 [w]b +Kw+c\right) M\{ w\}^{-1}, \\
   & M\{ w\} :=-\frac{\gamma}{2} S_0 [w]+d^{tr}w+\delta .   
\end{align*}
$O^+ (S_1; \R )$ resp.  $SO^+ (S_1; \R )$ stands for the subgroup, which maps $\Hcal _S$ onto $\Hcal _S$. We consider the two standard embeddings of $\SL _2 (\R )$ into $SO^+ (S_1; \R )$.
Given a matrix $U=\begin{pmatrix}
\alpha & \beta \\ \gamma & \delta
\end{pmatrix}\in\SL _2 (\R )$ set
\begin{gather*}\tag{2}\label{gl_2}
U^{\downarrow}:= \begin{pmatrix}
HUH & 0 & 0 \\ 0 & I^{(n)} & 0 \\ 0 & 0 & U
\end{pmatrix}, \quad U^{\uparrow}:= \begin{pmatrix}
\alpha I^{(2)} & 0 & \beta H \\ 0 & I^{(n)} & 0 \\ \gamma H & 0 & \delta I^{(2)}
\end{pmatrix}\in SO^+ (S_1; \R ),
\end{gather*}
where $H=\begin{pmatrix}
-1 & 0 \\ 0 & 1
\end{pmatrix}$ and $I$ stands for the identity matrix of appropriate size. Denote by
\[
\Gamma _S:=\left\{ M\in SO^+ (S_1; \R ); \; M\in I+\Z ^{(n+4)\times (n+4)}S_1 \right\}
\]
the \textit{discriminant kernel}. The matrices

\begin{gather*}\tag{3}\label{gl_3}
  M_{\lambda} =\begin{pmatrix}
  1 & -\lambda ^{tr}S_0 & -\frac{1}{2}S_0 [\lambda ] \\
  0 & I & \lambda \\
  0 & 0 & 1
  \end{pmatrix}, \; \widetilde{M}_{\lambda}=\begin{pmatrix}
  1 & 0 & 0 \\
  \lambda & I & 0 \\
  -\frac{1}{2}S_0 [\lambda ] & -\lambda ^{tr}S_0 & 1
  \end{pmatrix}, \quad \lambda\in\Z ^{n+2},
\end{gather*}
belong to $\Gamma _S$ as well as

\begin{align*}\tag{4}\label{gl_4}
\begin{cases}
R_K = \begin{pmatrix}
1 & 0 & 0 \\
0 & K & 0 \\
0 & 0 & 1
\end{pmatrix}\in O^+ (S_1; \Z ) \;\; \text{for} \;\; K\in O^+ (S_0; \Z ),  \\ 
 e.g. \; K_{\mu} =\begin{pmatrix}
1 & \mu ^{tr}S & \frac{1}{2}S[\mu ]\\
0 & I^{(n)} & \mu \\ 
0 & 0 & 1
\end{pmatrix},  \; \tilde{K}_{\mu}=\begin{pmatrix}
1 & 0 & 0 \\ \mu & I^{(n)} & 0 \\ \frac{1}{2}S[\mu ] & \mu ^{tr}S & 1
\end{pmatrix}, \; \mu\in\Z ^n, \\
\hat{K} = \begin{pmatrix}
0 & 0 & 1 \\ 0 & I^{(n)} & 0 \\ 1 & 0 & 0
\end{pmatrix}, \quad \det \hat{K} = -1, \; \tilde{K}_{\mu}=\hat{K}K_{\mu }\hat{K}.
\end{cases}
\end{align*}
Given $k\in\N$ the space $\Mcal _k (\Gamma )$ for a group $\Gamma$,\; $\Gamma _S \subseteq \Gamma\subseteq O^+ (S_1; \Z )$,  of \textit{modular forms} of weight $k$ with respect to $\Gamma$ consists of all holomorphic functions $f: \Hcal _S\to \C$ satisfying 

\begin{gather*}\tag{5}\label{gl_5}
f(w) = f\underset{k}\mid M(w) := f(M\langle w\rangle ) M\{ w\}^{-k} \; \text{for all} \; M\in\Gamma .
\end{gather*}
Each such $f$ possesses a Fourier expansion of the form

\begin{gather*}\tag{6}\label{gl_6}
f(w) =\sum _{\lambda \in L_0^{\#}, \; \lambda\geq 0} \alpha _f (\lambda ) e^{2\pi i\lambda ^{tr} S_0 w},
\end{gather*}
where $L_0^{\#}= S_0^{-1}\Z ^{n+2}$ denotes the dual lattice and $\lambda = (\lambda _1, \ldots , \lambda _{n+2})^{tr}\geq 0$ means, $\lambda _1 \geq 0, \; \lambda _{n+2}\geq 0$ and $S_0 [\lambda ]\geq 0$. The subspace $\Ccal_k (\Gamma )$ of \textit{cusp forms} consists of all $f\in\Mcal _k (\Gamma )$ satisfying

\begin{gather*}\tag{7}\label{gl_7}
\alpha _f (\lambda )\neq 0 \quad \Rightarrow\quad S_0 [\lambda ] >0.
\end{gather*}

\begin{lemma}\label{lemma_1} %% Lemma 1
Let $(\ref{gl_1})$ hold and $0\neq y\in\R ^{n+2}, \; S_0 [y]=0$. Then 
\[
\left\{ v\in\R ^{n+2}; \; y^{tr} S_0v=0, \; v\geq 0\right\} \subseteq \left\{ v\in\R ^{n+2}; \; y^{tr}S_0 v=0, \; S_0 [v] =0\right\} = \R y.
\]
\end{lemma}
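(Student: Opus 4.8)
The plan is to exploit that $S_0$ is Lorentzian, i.e. of signature $(1,n+1)$: writing $v=(v_1,v',v_{n+2})^{tr}$ with $v'\in\R^n$ one has $S_0[v]=2v_1v_{n+2}-S[v']$, so the hyperbolic plane carried by the first and last coordinates contributes $(1,1)$ while the block $-S$ contributes $(0,n)$. The whole statement will then follow from one structural fact: for the null vector $y$ the hyperplane $y^\perp:=\{v\in\R^{n+2};\,y^{tr}S_0v=0\}$ carries a negative semidefinite restriction of $S_0$ whose radical is exactly $\R y$.

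To establish this I would first produce a hyperbolic partner for $y$. Since $S_0$ is non-degenerate and $y\neq 0$, the functional $v\mapsto y^{tr}S_0v$ is non-zero, so there is $y^*$ with $y^{tr}S_0y^*=1$; replacing $y^*$ by $y^*-\tfrac12 S_0[y^*]\,y$ keeps this pairing equal to $1$ and arranges $S_0[y^*]=0$. Then $P:=\R y+\R y^*$ is a hyperbolic plane of signature $(1,1)$, hence non-degenerate, and its orthogonal complement $W:=P^\perp$ has dimension $n$ and signature $(0,n)$ by Sylvester's law of inertia, i.e. $S_0$ is negative definite on $W$. Because $W\perp y$ and $y\in P\setminus W$, a dimension count yields the orthogonal decomposition $y^\perp=\R y\oplus W$. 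For $v=sy+w$ with $w\in W$ one computes $S_0[v]=s^2S_0[y]+2s\,y^{tr}S_0w+S_0[w]=S_0[w]\leq 0$, with equality precisely when $w=0$, that is, when $v\in\R y$.

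With this in hand both assertions are immediate. The inclusion holds because any $v$ with $y^{tr}S_0v=0$ and $v\geq 0$ lies in $y^\perp$, whence $S_0[v]\leq 0$, while $v\geq 0$ forces $S_0[v]\geq 0$; together they give $S_0[v]=0$, so $v$ belongs to the right-hand set. (Note that only the condition $S_0[v]\geq 0$ contained in $v\geq 0$ is actually used.) For the equality, $\R y$ is contained in the right-hand set since $y$ is null, and conversely any $v\in y^\perp$ with $S_0[v]=0$ must have $w=0$ in the decomposition above, so $v\in\R y$.

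The only genuinely substantive step is the second paragraph, namely pinning down the signature of $S_0$ on $y^\perp$; this is where the hyperbolic-pair construction and the inertia argument are needed. Once the splitting $y^\perp=\R y\oplus W$ with $W$ negative definite is available, the remainder is a one-line sign comparison.
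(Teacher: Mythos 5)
Your proof is correct, and it takes a somewhat different route from the paper's, which is a one-liner: after an orthogonal transformation one may assume $y=e_1$, in which case $y^{tr}S_0v=v_{n+2}$, so on $y^{\perp}$ one has $S_0[v]=-S[(v_2,\ldots,v_{n+1})^{tr}]\leq 0$ and everything follows from the positive definiteness of $S$. Your decomposition $y^{\perp}=\R y\oplus W$ with $W$ negative definite is exactly what that normalization makes visible in coordinates, but you reach it intrinsically: you construct the hyperbolic partner $y^{*}$ by hand instead of invoking the transitivity of $O(S_0;\R)$ on nonzero null vectors (Witt's theorem), which the paper uses without comment. Your version also makes explicit a point the paper's reduction glosses over: the condition $v\geq 0$ consists of the coordinate inequalities $v_1\geq 0,\ v_{n+2}\geq 0$ together with $S_0[v]\geq 0$, and only the last, orthogonally invariant part is needed — so the normalization $y=e_1$ is legitimate precisely because the inclusion being proved survives weakening $v\geq 0$ to $S_0[v]\geq 0$, which you note. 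The trade-off is brevity versus self-containedness: the paper's argument is shorter but leans on two unstated facts (transitivity on null vectors, invariance of the relevant hypotheses), while yours proves everything it uses.
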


\begin{proof}
After an orthogonal transformation, one may assume $y=e_1 = (1, 0, \ldots , 0)^{tr}$, where the claim is straightforward.
\end{proof}

The Siegel $\phi$-operator is defined by
\begin{gather*}\tag{8}\label{gl_8}
f\mid\phi : \Hcal \to \C, \; \tau \mapsto\lim _{t\to\infty} f
\; \left( (it, 0,  \tau)^{tr}\right) ,
\end{gather*}
whenever this limit exists. Given $K=(u, \ldots , v)\in O^{+} (S_1, \Q )$ with $u\geq 0$ we have $S_0 [u]=0$ and $u^{tr}S_0 v=1$ and therefore 

\begin{gather*}\tag{9}\label{gl_9}
f\underset{k}\mid R_K\mid \phi (\tau )=\sum _{l\in\Q , lu\in L_0^{\#}\; lu\geq 0} \alpha _f (lu)e^{2\pi il\tau }
\end{gather*}
by Lemma \ref{lemma_1}. As (\ref{gl_1}) implies that any $\lambda\in L_0^{\#}$ with $S_0 [\lambda ]=0$ leads to $\lambda\in\Z ^{n+2}$, a direct consequence of (\ref{gl_9}) can be formulated as

\begin{lemma}\label{lemma_2} %% Lemma 2
Let $(\ref{gl_1})$ hold and $f\in\Mcal _k (\Gamma _S )$. Then $f$ is a cusp form if and only if
\[
f\underset{k}\mid R_K\mid \phi \equiv 0
\]
for all $K=(\lambda , \ast \ldots\ast )\in O^+ (S_1; \Q ), \; \lambda\in L_0^{\#}, \; \lambda\geq 0$ and $gcd (S_0\lambda )=1$.
\end{lemma}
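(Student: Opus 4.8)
The statement is a biconditional, and the plan is to prove the two implications separately by reading off, in each case, the Fourier expansion of the boundary function supplied by (\ref{gl_9}). The point to keep in mind throughout is that the series in (\ref{gl_9}) runs only over the multiples $l\lambda$ of the isotropic first column $\lambda$, and every such vector satisfies $S_0[l\lambda] = l^2 S_0[\lambda] = 0$.

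For the implication ``$f$ a cusp form $\Rightarrow$ vanishing'', I would simply invoke the cusp condition (\ref{gl_7}): since $\alpha_f(\nu) = 0$ whenever $S_0[\nu] = 0$, and every coefficient $\alpha_f(l\lambda)$ appearing in (\ref{gl_9}) has $S_0[l\lambda] = 0$, the whole series vanishes term by term, so $f \underset{k}\mid R_K \mid \phi \equiv 0$ for every admissible $K$. This direction uses nothing beyond (\ref{gl_9}) and the definition of a cusp form.

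For the converse I would argue by contraposition on the level of Fourier coefficients. Suppose $f$ is not a cusp form; since the expansion (\ref{gl_6}) only involves $\mu \geq 0$, failure of (\ref{gl_7}) means there is some $0 \neq \mu \in L_0^{\#}$ with $\mu \geq 0$, $S_0[\mu] = 0$ and $\alpha_f(\mu) \neq 0$. By the remark preceding the lemma, $\mu \in \Z^{n+2}$, hence $S_0\mu \in \Z^{n+2}$; setting $g := \gcd(S_0\mu) > 0$ and $\lambda := \mu / g$ produces a vector $\lambda \in L_0^{\#}$ that is again isotropic and $\geq 0$ and now primitive, i.e.\ $\gcd(S_0\lambda) = 1$, with $\mu = g\lambda$. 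The decisive step is then to realize $\lambda$ as the first column of some $K \in O^+(S_0; \Q)$ (so that $R_K \in O^+(S_1;\Q)$ as in the statement): as both $e_1$ and $\lambda$ are isotropic for $S_0$, Witt's extension theorem over $\Q$ furnishes an isometry in $O(S_0;\Q)$ sending $e_1$ to $\lambda$, and after composing with a suitable element of the stabilizer of $e_1$ one may assume it lies in $O^+(S_0;\Q)$. Plugging this $K$ into (\ref{gl_9}) yields $f \underset{k}\mid R_K \mid \phi(\tau) = \sum_{l} \alpha_f(l\lambda)\, e^{2\pi i l\tau}$, and the term $l = g$ carries the nonzero coefficient $\alpha_f(\mu)$. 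Since the exponentials $e^{2\pi i l\tau}$ for distinct $l$ are linearly independent on $\Hcal$, this boundary function is not identically zero, contradicting the hypothesis; hence $f$ is a cusp form.

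The only genuinely nontrivial ingredient is the existence of $K = (\lambda, \ast, \ldots, \ast) \in O^+(S_0;\Q)$ with prescribed primitive isotropic first column $\lambda$ — that is, transitivity of the rational orthogonal group on primitive isotropic vectors, together with enough control to keep $K$ orientation-preserving (inside $O^+$). I expect this Witt-extension step, and the accompanying normalization $\mu = g\lambda$ to a primitive vector, to be precisely what the authors have folded into the phrase ``a direct consequence of (\ref{gl_9})''. The remaining pieces — collapsing the series for a cusp form and separating coefficients by linear independence of exponentials — are routine.
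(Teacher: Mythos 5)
Your overall skeleton is sound and it matches what the paper intends: the paper gives Lemma \ref{lemma_2} no separate proof at all, declaring it ``a direct consequence of (\ref{gl_9})'' together with the remark that maximality of $S$ forces isotropic vectors of $L_0^{\#}$ into $\Z^{n+2}$. Your forward direction, the normalization $\mu=g\lambda$ to a primitive isotropic vector, and the separation of Fourier coefficients are exactly the suppressed details, and you correctly isolate the one nontrivial ingredient: the existence of $K\in O^+(S_0;\Q)$ with prescribed primitive isotropic first column $\lambda\geq 0$ (the paper's ``$K\in O^+(S_1;\Q)$'' must indeed be read this way, as you do). Two defects remain, one minor and one genuine. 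The minor one: your contraposition posits a \emph{nonzero} $\mu$ with $\alpha_f(\mu)\neq 0$, $S_0[\mu]=0$, but failure of (\ref{gl_7}) could occur only at $\mu=0$; that case is settled by taking $K=I$, whose boundary series has constant term $\alpha_f(0)$.

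The genuine flaw is your mechanism for forcing $K$ into $O^+$. The stabilizer of $e_1$ in $O(S_0;\R)$ consists of matrices of the shape $\begin{pmatrix} 1 & \ast & \ast \\ 0 & \ast & \ast \\ 0 & 0 & 1 \end{pmatrix}$: from $Pe_1=e_1$ and $P^{tr}S_0P=S_0$ one gets that the last row of $P$ is $(0,\ldots,0,1)$. Every such $P$ preserves the last coordinate of any vector, and for $v$ with $S_0[v]>0$ the component of the positive cone containing $v$ is determined by the sign of $v_{n+2}$; hence the entire stabilizer of $e_1$ lies in $O^+(S_0;\R)$, and composing a Witt extension with its elements can never move it from $O\setminus O^+$ into $O^+$. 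If Witt's theorem handed you an orientation-reversing isometry, your recipe could not repair it. Fortunately no repair is needed, and this is precisely where the hypothesis $\lambda\geq 0$ enters: both $e_1$ and $\lambda$ lie in the closure of the cone $C^+=\{v:\,S_0[v]>0,\ v_1>0,\ v_{n+2}>0\}$, any element of $O(S_0;\R)\setminus O^+(S_0;\R)$ maps that closure onto the closure of $-C^+$, and the two closures intersect only in $0$; since $\lambda\neq 0$, every $K\in O(S_0;\Q)$ with $Ke_1=\lambda$ automatically lies in $O^+(S_0;\Q)$. With that step replaced (and the $\mu=0$ case added), your proof is complete and is, in substance, the argument the paper leaves implicit.
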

Consider the maximal parabolic subgroup
\[
\Gamma _{S,\infty}:= \left\{ \begin{pmatrix}
\ast & \cdots & \ast & \ast \\ 
0 & \cdots & 0 & \pm 1
\end{pmatrix}\in\Gamma _S \right\}
\]
and denote the \textit{Eisenstein series} of even weight $k>n+2$ associated with $\Gamma _S$ by
\[
E_{k, S} (w) := \sum _{M:\Gamma _{S,\infty}\backslash \Gamma _S} M\{ w\}^{-k}.
\]
Moreover
\[
E_{k, ell} (\tau )=\frac{1}{2}\sum _{\substack{(\gamma , \delta )\in\Z\times \Z \\ gcd (\gamma , \delta )=1}} (\gamma\tau + \delta )^{-k}
\]
stands for the classical elliptic Eisenstein series.
\begin{lemma}\label{lemma_3} %% Lemma 3
Let $(\ref{gl_1})$ hold and let $k>n+2$ be even. Then 
\[
E_{k, S}\underset{k}\mid R_K \underset{k}\mid \phi (\tau ) =E_{k, ell} (\tau )
\]
holds for all $K=(u, \ldots , v)\in O^+ (S_1; \Q ), \; u\in L_0^{\#}, \; u\geq 0, \; gcd (S_0 u)=1$.
\end{lemma}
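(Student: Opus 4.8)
The plan is to substitute the degenerating point $w_t:=(it,0,\tau)^{tr}$ into the defining series and to isolate the summands that survive as $t\to\infty$. First I would use the cocycle property of the automorphy factor to write $E_{k,S}\underset{k}{\mid}R_K(w)=\sum_{M:\Gamma_{S,\infty}\backslash\Gamma_S}(MR_K)\{w\}^{-k}$; since $R_K$ acts by $w\mapsto Kw$ with $R_K\{w\}=1$, one has $(MR_K)\{w\}=M\{Kw\}$. Writing the bottom row of $M$ as $(\gamma,d^{tr},\delta)$ and using $S_0[w_t]=2it\tau$ together with $Kw_t=it\,u+\tau\,v$ (where $u,v$ are the first and last columns of $K$), a direct calculation gives
\[
(MR_K)\{w_t\}=it\,(d^{tr}u-\gamma\tau)+\tau\,d^{tr}v+\delta .
\]

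Next I would let $t\to\infty$. As $k>n+2$ the series converges absolutely and locally uniformly, so after a standard majorant estimate the limit may be taken termwise. A summand survives precisely when the coefficient of $it$ vanishes; since $\tau\in\Hcal$ while $\gamma,d^{tr}u\in\Q$, this forces $\gamma=0$ and $d^{tr}u=0$, and the surviving contribution is $(\tau\,d^{tr}v+\delta)^{-k}$. The last row of $M\in SO^+(S_1;\R)$ is $S_1^{-1}$-isotropic, which for $\gamma=0$ reads $S_0^{-1}[d]=0$; setting $\tilde d:=S_0^{-1}d$ gives $S_0[\tilde d]=0$ and $u^{tr}S_0\tilde d=d^{tr}u=0$, so Lemma~\ref{lemma_1} yields $\tilde d\in\R u$, i.e. $d=\rho\,S_0u$ with $\rho\in\R$. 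Hence $d^{tr}v=\rho\,u^{tr}S_0v=\rho$ and each surviving term is simply $(\rho\tau+\delta)^{-k}$.

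It remains to parametrize the surviving cosets. Writing $M=I+AS_1$ with $A$ integral shows its bottom row satisfies $\gamma,\delta\in\Z$ and $d\in S_0\Z^{n+2}$; together with $d=\rho S_0u$ and $\gcd(S_0u)=1$ this forces $\rho\in\Z$, and since $\Gamma_S$ consists of integral matrices of determinant $1$ the bottom row is primitive, so $\gcd(\rho,\delta)=1$. Two elements of $\Gamma_S$ lie in the same left $\Gamma_{S,\infty}$-coset iff their bottom rows agree up to sign, the sign being realized by $\operatorname{diag}(-1,-1,I^{(n)},-1,-1)\in\Gamma_{S,\infty}$; hence the surviving cosets correspond bijectively to coprime pairs $(\rho,\delta)$ modulo $\pm1$, i.e. to $\Gamma_\infty\backslash\SL_2(\Z)$. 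Using that $k$ is even, the surviving sum equals $\tfrac12\sum_{\gcd(\rho,\delta)=1}(\rho\tau+\delta)^{-k}=E_{k,ell}(\tau)$.

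The real obstacle is the surjectivity of this parametrization: one must know that every coprime pair $(\rho,\delta)$ actually occurs as the bottom row $(0,\rho(S_0u)^{tr},\delta)$ of some element of $\Gamma_S$. For $u=e_1$ this is immediate from the embedding $U\mapsto U^{\downarrow}$, which maps $\SL_2(\Z)$ into $\Gamma_S$ and produces exactly these rows; for a general primitive isotropic $u$ it reduces to a transitivity (Eichler–Witt) statement for the discriminant kernel on primitive isotropic vectors, available because $S_1$ splits off two hyperbolic planes. The only other technical point is the uniform majorant justifying the termwise passage to the limit.
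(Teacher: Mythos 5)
Your proof is correct and follows essentially the same route as the paper: substitute $w_t=(it,0,\tau)^{tr}$, pass to the limit termwise using convergence for $k>n+2$, kill every term with $\gamma\neq 0$ or $d^{tr}u\neq 0$, and identify the survivors via Lemma~\ref{lemma_1} as the terms $(\rho\tau+\delta)^{-k}$ with $\gcd(\rho,\delta)=1$. The only difference is one of detail: the paper compresses the final parametrization into ``Hence we get the claim'', whereas you spell out the bijection between surviving cosets and coprime pairs modulo sign -- including the surjectivity via $U^{\downarrow}$ and Eichler--Witt transitivity of the discriminant kernel on primitive isotropic vectors -- a genuine (if standard) point that the paper's proof leaves implicit.
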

\begin{proof}
Note that the form of $M^{-1}$ for $M\in\Gamma _S$ leads to 
\[
M=\begin{pmatrix}
\ast & \ast & \ast \\ 
\gamma & d^{tr}S_0 & \delta
\end{pmatrix}, \quad d\in\Z^{n+2}.
\]
As $E_{k, S}(w)$ converges uniformly in vertical strips, we may interchange the limit and the summation. Note that
\[
\lim _{t\to\infty} M\left\{ (it, 0, \tau)^{tr}\right\} ^{-k} = \lim _{t\to\infty} \left( it \left( -\gamma \tau +d^{tr}S_0 u\right) +\left( \tau d^{tr}S_0 v+\delta\right)\right)^{-k} =0
\]
unless $\gamma =0$ and $d^{tr}S_0u =0$. According to Lemma \ref{lemma_1}, this means to sum over
\[
M=\begin{pmatrix}
\ast & \ast & \ast \\
0 & \gamma ' u ^{tr}S_0 & \delta
\end{pmatrix}, \; \gamma ' , \delta \in\Z , \; gcd (\gamma ' , \delta )=1.
\]
Hence we get the claim.
\end{proof}

\section{Hecke-Theory}
\label{sect_2}

Given $r\in\N$ let
\begin{gather*}
\Delta _S (r) := \left\{ R\in\Z ^{(n+4)\times (n+4)}; \; \frac{1}{\sqrt{r}}R\in SO^+ (S_1; \R )\right\}, \quad \Delta _S := \bigcup _{r=1}^{\infty} \Delta _S (r).
\end{gather*}
It is well-known (cf. [7]) that $(\Gamma _S, \Delta _S)$ is a Hecke pair. From [19], Theorem 2, we quote
\begin{lemma}\label{lemma_4} %% Lemma 4
Let $(\ref{gl_1})$ hold and $R\in\Delta _S (r), \; r\in\N$. \\
a) \; The right coset $\Gamma _S R$ contains a representative 
\[
\begin{pmatrix}
\alpha & \ast & \ast \\
0 & \ast & \ast \\
0 & 0 & \delta
\end{pmatrix}, \quad \alpha , \delta\in\N , \quad \alpha\delta =r,
\]
where $\alpha$ is the $gcd$ of the entries of the first column of $R$. \\
b)\; The double coset $\Gamma _SR\Gamma _S$ contains a representative 
\[
R^*=\begin{pmatrix}
\alpha ^* & 0 & 0 \\ 
0 & K^* & 0 \\ 
0 & 0 & \delta ^*
\end{pmatrix}, \quad \alpha ^*, \;\delta ^*\in\N, \; \alpha ^*\delta ^*=r,
\]
where $\alpha ^*$ is the $gcd$ of the entries of $R$ and $R^*$. \\
\end{lemma}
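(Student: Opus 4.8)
The plan is to read (a) and (b) as an orthogonal analogue of the Hermite and Smith normal forms. Throughout I use that $\tfrac{1}{\sqrt r}R\in SO^+(S_1;\R)$ is equivalent to $R^{tr}S_1R=rS_1$. For (a) I would start from the first column $c:=Re_1$. Since $e_1$ is isotropic, $S_1[c]=r\,S_1[e_1]=0$, so $c$ is an integral isotropic vector; let $\alpha$ be the $\gcd$ of its entries and $c=\alpha c_0$ with $c_0$ primitive. The decisive step is to produce $U\in\Gamma_S$ with $Uc_0=e_1$. Granting this, $UR$ has first column $\alpha e_1$, and since $U$ is unimodular the $\gcd$ of that column is still $\alpha$. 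The last row then comes for free: using $e_1^{tr}S_1=e_{n+4}^{tr}$ and $(UR)^{tr}S_1(UR)=rS_1$,
\[
\alpha\cdot(\text{last row of }UR)=(URe_1)^{tr}S_1(UR)=r\,e_1^{tr}S_1=r\,e_{n+4}^{tr},
\]
so the last row is $(0,\dots,0,\delta)$ with $\delta=r/\alpha$, an integer because $UR$ is integral and positive after a sign adjustment. This gives $\alpha\delta=r$ and the shape claimed in (a).

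For the existence of $U$ I would appeal to the transitivity of the discriminant kernel on primitive isotropic vectors. The lattice $(L_1,S_1)$ contains two orthogonal hyperbolic planes, spanned by $e_1,e_{n+4}$ and by the isotropic pair inside $S_0$, so Eichler's theorem applies and $\Gamma_S$ acts transitively on primitive vectors of fixed norm lying in a fixed class of $L_1^{\#}/L_1$. Here the maximal–even hypothesis in $(\ref{gl_1})$ is essential: it forces every primitive isotropic $c_0$ to have divisor $1$, hence to lie in the same discriminant class as $e_1$, which is exactly what places $c_0$ in the $\Gamma_S$–orbit of $e_1$. Alternatively $U$ can be built explicitly as a product of the generators $M_\lambda,\widetilde M_\lambda,R_{K_\mu},R_{\widetilde K_\mu},R_{\hat K}$ and the embeddings $U^{\downarrow},U^{\uparrow}$, performing a Euclidean reduction on the entries of $c$.

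For (b) I would begin from the triangular representative of (a), written blockwise as $R'=\begin{pmatrix}\alpha & a^{tr} & \beta\\ 0 & K & c\\ 0 & 0 & \delta\end{pmatrix}$. Expanding $R'^{tr}S_1R'=rS_1$ block by block gives
\[
K^{tr}S_0K=rS_0,\qquad \delta a=-K^{tr}S_0c,\qquad 2\beta\delta=-S_0[c].
\]
The first relation says the middle block is itself a similitude, $K\in\Delta_{S_0}(r)$, for the smaller lattice $(L_0,S_0)$, which again carries a hyperbolic plane; by the analogue of the present lemma one rank down (an induction along the tower $S_1\to S_0\to S$ that bottoms out at the anisotropic core $S$, using the discriminant kernel of $S_0$), realized by conjugating with $R_{K_1},R_{K_2}\in\Gamma_S$ acting on $R'$ as $K\mapsto K_1KK_2$, I may assume $K$ is already in elementary–divisor form. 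The key point is that once the block $c$ is cleared the remaining two relations force $a=0$ and $\beta=0$, so it suffices to drive $c$ to $0$. Right multiplication by $M_\mu$ replaces $c$ by $c+K\mu$, reducing $c$ modulo $K\Z^{n+2}$; the residue is pinned down by $K^{-1}c=-\tfrac1\alpha S_0^{-1}a$ together with the normalization of $K$, and — using once more that $S$ is maximal even — is absorbed, leaving $R^{*}=\mathrm{diag}(\alpha^{*},K^{*},\delta^{*})$ with $\alpha^{*}\delta^{*}=r$. Finally the $\gcd$ of all entries is a two–sided invariant under $\Gamma_S$, and for the diagonal form it equals the smallest elementary divisor $\alpha^{*}$, which gives the last assertion.

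The genuine difficulty sits in two places. In (a) it is the verification that $c_0$ really lies in the $\Gamma_S$–orbit of $e_1$; for a non–maximal lattice several orbits (distinct divisors, distinct discriminant classes) occur and the reduction would stall at a larger $\alpha$, so the maximal–even hypothesis is doing real work. In (b) the hard part is the simultaneous clearing of the off–diagonal blocks: the integrality of the vector $\mu$ needed to annihilate $c$ is not automatic and must be reconciled with the similitude constraints, which is why the middle similitude $K$ must first be brought to elementary–divisor form. I expect this simultaneous reduction, rather than the bookkeeping that follows it, to be the main obstacle; once $c$ is removed, the collapse $a=\beta=0$ is immediate and the diagonal form drops out.
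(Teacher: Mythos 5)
First, a point of reference: the paper does not prove this lemma at all --- it is quoted verbatim from [19], Theorem 2 --- so your argument has to stand entirely on its own. Part (a) does stand. The isotropy of the first column, the role of maximality (if $c_0$ were primitive isotropic with divisor $d>1$, then $c_0/d$ would be an isotropic vector of $L_1^{\#}\setminus L_1$, which maximality forbids), the appeal to Eichler's criterion (legitimate here: $(L_1,S_1)$ splits two orthogonal hyperbolic planes, $\Gamma_S$ is exactly the discriminant kernel of $L_1$ and contains the Eichler transvections), and the computation forcing the last row to be $\delta e_{n+4}^{tr}$ with $\delta=r/\alpha\in\N$ are all correct.

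Part (b) is where the proof breaks, in two places. The pivotal step ``by the analogue of the present lemma one rank down \dots I may assume $K$ is already in elementary--divisor form'' is not available, and in fact the statement you invoke is false, not merely unproven. The lattice $(L_0,S_0)$ is the orthogonal sum of only \emph{one} hyperbolic plane and $(\Z^n,-S)$, so Eichler's criterion --- the engine of your part (a) --- does not apply to it, and the transitivity it would supply genuinely fails: the orbits of primitive isotropic vectors $v\in L_0$ under $O(S_0;\Z)$ are classified by the isometry class of $v^{\perp}/\Z v$, which runs through the whole genus of $-S$. Within the hypotheses of $(\ref{gl_1})$ this genus can have several classes: for $n=16$ and $S$ the Gram matrix of $E_8\perp E_8$ (even unimodular, hence maximal even) it also contains $D_{16}^{+}$. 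Scaling a rational isometry between the two orthogonal complements produces an integral similitude of $S_0$ (of ratio $m^2$ for suitable $m$) whose first column is a primitive isotropic $v$ with $v^{\perp}/\Z v\cong -D_{16}^{+}$; no integral isometry of $L_0$, let alone an element of its discriminant kernel, can carry $v$ into $\Z e_1$, so the coset of this similitude contains no triangular and a fortiori no block-diagonal representative. Hence the descent along the tower $S_1\to S_0\to S$ cannot work; a correct proof must stay at the level of $S_1$, where both hyperbolic planes are present, or localize prime by prime, where class-number obstructions disappear. Compounding this, the step you yourself flag as the main obstacle --- that after reducing $c$ modulo $K\Z^{n+2}$ the residue ``is absorbed'' --- is asserted, never argued. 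What must be proved is precisely $c\in K\Z^{n+2}+\delta\Z^{n+2}$ given only the integrality constraints $\delta\mid K^{tr}S_0c$ and $2\delta\mid S_0[c]$ (note that left multiplication by $M_\lambda$, which you never use, contributes the summand $\delta\Z^{n+2}$). That claim is the entire content of part (b), and it is missing.

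Finally, even granting the diagonalization, your last sentence does not yield the gcd assertion. Your reduction inherits from (a) a corner entry equal to the gcd of the \emph{first column} of $R$, which is in general strictly larger than the gcd of all entries: $R=\diag (q^2,q,\ldots ,q,1)\in\Delta _S(q^2)$ is already diagonal with corner $q^2$ but total gcd $1$, and the lemma asserts --- consistently with Lemma \ref{lemma_7}, where this very matrix represents a right coset of $\Tcal _S(q)=\Gamma _S\diag (1,q,\ldots ,q,q^2)\Gamma _S$ --- that its double coset contains a diagonal representative with corner $1$. Producing that representative needs further double-coset moves (for instance through the embedded $\SL _2$'s of (\ref{gl_2})) which your sketch never makes; the remark that the total gcd is a two-sided invariant gives the uniqueness of $\alpha ^*$, not the existence of a representative realizing it.
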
 
Given $f\in\Mcal _k (\Gamma _S)$ and $R\in\Delta _S$ we define the \textit{Hecke operator}
\begin{gather*}\tag{10}\label{gl_10}
f\underset{k}\mid \Gamma _S R\Gamma _S:=\sum _{M:\Gamma _S\backslash\Gamma _S R\Gamma _S} f\underset{k}\mid M\in\Mcal _k(\Gamma _S)
\end{gather*}
without any normalizing factor. A direct consequence of (\ref{gl_7}) and Lemma \ref{lemma_4} a) is
\begin{gather*}\tag{11}\label{gl_11}
f\in \Ccal _k (\Gamma _S)\quad \Rightarrow\quad f\underset{k}\mid \Gamma _S R \Gamma _S\in \Ccal _k (\Gamma _S).
\end{gather*}

\begin{lemma}\label{lemma_5} %% Lemma 5
Let $(\ref{gl_1})$ hold and $k>n+2$ be even. Then the Eisenstein series $E_{k,S}$ is a simultaneous Hecke eigenform. More precisely for $R\in\Delta _S$
\[
E_{k, S} \underset{k}\mid \Gamma _S R\Gamma _S =\rho _k E_{k,  S},
\]
holds, where
\[
\rho _k:= \sum_{\begin{pmatrix}
\ast & \ldots & \ast & \ast \\ 
0 & \ldots & 0 & \delta
\end{pmatrix}: \Gamma _S\backslash \Gamma _S R \Gamma _S} \delta ^{-k}.
\]
\end{lemma}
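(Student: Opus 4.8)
The plan is to collapse the Hecke sum onto a single Eisenstein-type series and then reorganise it according to the last rows of the matrices that occur. Writing $\mathbf{1}$ for the constant function $1$ one has $\mathbf{1}\underset{k}{\mid}M = M\{\cdot\}^{-k}$, and since $k$ is even every $P\in\Gamma_{S,\infty}$ satisfies $P\{w\}=\pm 1$, so $\mathbf{1}\underset{k}{\mid}P=\mathbf{1}$ and therefore
\[
E_{k,S}=\sum_{N:\Gamma_{S,\infty}\backslash\Gamma_S}\mathbf{1}\underset{k}{\mid}N .
\]
Choosing representatives $\Gamma_S R\Gamma_S=\bigsqcup_j\Gamma_S M_j$ and $\Gamma_S=\bigsqcup_i\Gamma_{S,\infty}N_i$ yields the disjoint decomposition $\Gamma_S R\Gamma_S=\bigsqcup_{i,j}\Gamma_{S,\infty}N_iM_j$, and the cocycle relation $(AB)\{w\}=A\{B\langle w\rangle\}\,B\{w\}$ for the automorphy factor turns the definition of the Hecke operator into
\[
E_{k,S}\underset{k}{\mid}\Gamma_S R\Gamma_S=\sum_{i,j}\mathbf{1}\underset{k}{\mid}(N_iM_j)=\sum_{P:\Gamma_{S,\infty}\backslash\Gamma_S R\Gamma_S}P\{w\}^{-k}.
\]
Absolute and locally uniform convergence for $k>n+2$ (the estimate already needed to define $E_{k,S}$) justifies all rearrangements.

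Next I would do the bottom-row bookkeeping. The value $P\{w\}$ depends only on the last row $b(P)=(\gamma,d^{tr},\delta)$ of $P$, and replacing $P$ by another representative of $\Gamma_{S,\infty}P$ only changes $b(P)$ by a sign. Factor $b(P)=e(P)\,b_0(P)$ with $e(P)=\gcd b(P)\in\N$ and $b_0(P)$ primitive; then $P\{w\}^{-k}=e(P)^{-k}N\{w\}^{-k}$ for any $N\in\Gamma_S$ whose last row is $b_0(P)$. One uses here that the last rows of the elements of $\Gamma_S$ are exactly the $\Gamma_S$-orbit of $(0,\dots,0,1)$ under $v\mapsto vN$, with $N\mapsto b(N)$ inducing a bijection $\Gamma_{S,\infty}\backslash\Gamma_S\xrightarrow{\ \sim\ }\{\text{these last rows}\}/\{\pm1\}$. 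Grouping the sum by the coset $N:\Gamma_{S,\infty}\backslash\Gamma_S$ singled out by $b_0(P)$ gives
\[
E_{k,S}\underset{k}{\mid}\Gamma_S R\Gamma_S=\sum_{N:\Gamma_{S,\infty}\backslash\Gamma_S}c\bigl(b(N)\bigr)\,N\{w\}^{-k},\qquad c(b_0)=\sum_{\substack{P:\Gamma_{S,\infty}\backslash\Gamma_S R\Gamma_S\\ b_0(P)\parallel b_0}}e(P)^{-k}.
\]

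Finally I would show $c(b_0)=\rho_k$ for every $b_0$. Right multiplication by $N^{-1}\in\Gamma_S$ permutes $\Gamma_{S,\infty}\backslash\Gamma_S R\Gamma_S$, preserves $e(\cdot)$, and carries the condition $b_0(P)\parallel b(N)$ to $b_0(P)\parallel(0,\dots,0,1)$, so it suffices to treat $b_0=(0,\dots,0,1)$. Taking the representatives $M_j$ of Lemma \ref{lemma_4}~a) with last row $(0,\dots,0,\delta_j)$, their block-triangular shape forces an element $\eta M_j$ ($\eta\in\Gamma_S$) to have last row proportional to $(0,\dots,0,1)$ if and only if $\eta\in\Gamma_{S,\infty}$. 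Hence inside each right coset $\Gamma_S M_j$ there is exactly one $\Gamma_{S,\infty}$-coset with $b_0(P)\parallel(0,\dots,0,1)$, namely that of $M_j$, and for it $e(P)=\delta_j$; summing over $j$ gives $c\bigl((0,\dots,0,1)\bigr)=\sum_j\delta_j^{-k}=\rho_k$ and thus $E_{k,S}\underset{k}{\mid}\Gamma_S R\Gamma_S=\rho_k E_{k,S}$, the argument being uniform in $R$. The main obstacle is the fact invoked in the second paragraph: one must know that the primitive parts $b_0(P)$ never leave the single $\Gamma_S$-orbit of $(0,\dots,0,1)$ and that $\Gamma_S$ acts transitively on it, i.e. the transitivity of the discriminant kernel on primitive isotropic vectors. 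This is where the maximal-even structure of $(\ref{gl_1})$ — equivalently the two hyperbolic planes in $S_1$ together with Eichler's theorem — is needed; by comparison the cocycle identity and the convergence are routine.
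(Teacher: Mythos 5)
Your proposal is correct and follows essentially the same route as the paper: the paper's proof simply defers to Freitag [4], IV.4.7, whose argument is exactly your unfolding of $E_{k,S}\underset{k}\mid \Gamma_S R\Gamma_S$ over $\Gamma_{S,\infty}\backslash\Gamma_S R\Gamma_S$, the regrouping according to the primitive part of the last row (on which the automorphy factor alone depends), and the identification of the eigenvalue $\rho_k$ from the triangular cosets of Lemma~\ref{lemma_4}~a), i.e.\ from the constant Fourier coefficient. Your closing observation pinpoints the one genuinely orthogonal ingredient needed to transplant Freitag's symplectic argument — that the maximal-even hypothesis in (\ref{gl_1}) (anisotropy of the discriminant form) together with Eichler's transitivity for lattices splitting two hyperbolic planes forces every primitive part $b_0(P)$, $P\in\Gamma_S R\Gamma_S$, into the single $\Gamma_S$-orbit of $(0,\ldots,0,1)$ — which is exactly the substitute for the classical fact that every coprime symmetric pair is the bottom block of a symplectic matrix.
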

\begin{proof}
Proceed in the same way as Freitag [4], IV.4.7. The eigenvalue $\rho _k$ arises from the calculation of the constant Fourier coefficient.
\end{proof}
Note that $\rho _0=\# (\Gamma _S\backslash \Gamma _S R \Gamma _S$). As 
\[
\Hcal\to\R, \quad w\mapsto S_0[v]^{k/2} |f(w)|,
\]
is bounded for $f\in \Ccal _k (\Gamma _S)$, we conclude
\begin{lemma}\label{lemma_6} %% Lemma 6
Let $(\ref{gl_1})$ hold and $0\neq f\in \Ccal _k(\Gamma _S)$ such that
\[
f\underset{k}\mid \Gamma _S R\Gamma _S =\rho f
\]
for $R\in\Delta _S (r)$ and some $\rho\in\C$. Then
\[
|\rho |\leq r^{-k/2}\rho _0.
\]
\end{lemma}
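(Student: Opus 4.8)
The plan is to turn the boundedness of the $\Gamma_S$-invariant function noted just above the statement into a quantitative estimate via the behaviour of the automorphy factor under $\Delta_S(r)$. First I would record the relevant transformation law for the imaginary part. Writing $M=\sqrt r\,N$ with $N\in SO^+(S_1;\R)$ (possible since $R\in\Delta_S(r)$), the projective nature of the action gives $M\langle w\rangle=N\langle w\rangle$ while $M\{w\}=\sqrt r\,N\{w\}$, so the standard identity $S_0[\im N\langle w\rangle]=S_0[v]\,|N\{w\}|^{-2}$ for the orthogonal action yields
\[
S_0[\im M\langle w\rangle]=r\,S_0[v]\,|M\{w\}|^{-2}\qquad(M\in\Delta_S(r)).
\]
Set $\psi(w):=S_0[v]^{k/2}|f(w)|$. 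From $f(M\langle w\rangle)=f(w)\,M\{w\}^{k}$ for $M\in\Gamma_S$ one checks at once that $\psi$ is $\Gamma_S$-invariant; being the product of $S_0[v]^{k/2}$ with (the modulus of) a cusp form it is bounded, so $C:=\sup_{w\in\Hcal_S}\psi(w)$ is finite, and $C>0$ because $f\neq0$.

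Next I would feed the eigenvalue equation into this. Fixing a system of $\rho_0=\#(\Gamma_S\backslash\Gamma_SR\Gamma_S)$ representatives $M$, the relation $\rho f=\sum_M f\underset{k}\mid M$ gives, after multiplying by $S_0[v]^{k/2}$ and passing to absolute values,
\[
|\rho|\,\psi(w)\le S_0[v]^{k/2}\sum_M |f(M\langle w\rangle)|\,|M\{w\}|^{-k}.
\]
I would then rewrite each summand by means of $|f(M\langle w\rangle)|=\psi(M\langle w\rangle)\,S_0[\im M\langle w\rangle]^{-k/2}$ together with the displayed transformation law; the powers of $S_0[v]$ and of $|M\{w\}|$ cancel exactly and a factor $r^{-k/2}$ survives, leaving
\[
|\rho|\,\psi(w)\le r^{-k/2}\sum_M \psi(M\langle w\rangle).
\]

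Finally, since each point $M\langle w\rangle$ again lies in $\Hcal_S$ (as $N=\tfrac1{\sqrt r}M$ maps $\Hcal_S$ onto itself), every term on the right is at most $C$, so the sum is at most $r^{-k/2}\rho_0 C$. Taking the supremum over $w\in\Hcal_S$ on the left gives $|\rho|\,C\le r^{-k/2}\rho_0\,C$, and dividing by $C>0$ produces the asserted bound. I expect the only genuinely delicate point to be the transformation law for $S_0[\im M\langle w\rangle]$ under $\Delta_S(r)$ — in particular correctly extracting the factor $r$ from the rescaling $M=\sqrt r\,N$ — together with the boundedness of $\psi$ (already asserted above); once these are secured the estimate is a routine termwise comparison against the maximum $C$, exactly in the spirit of the classical Hecke bound for elliptic cusp forms.
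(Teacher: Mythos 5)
Your proof is correct and is essentially the argument the paper invokes: the paper's proof is just the citation ``Confer Freitag [4], IV.4.8,'' and Freitag's proof there is exactly this maximum-principle argument with the bounded invariant function $S_0[v]^{k/2}|f(w)|$ and the transformation law $S_0[\im M\langle w\rangle]=r\,S_0[v]\,|M\{w\}|^{-2}$ for $M\in\Delta_S(r)$, transcribed from the Siegel to the orthogonal setting. Your extraction of the factor $r^{-k/2}$ and the final supremum argument are both accurate, so nothing is missing.
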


\begin{proof}
Confer Freitag [4], IV.4.8.
\end{proof}

Given a prime $q$ we consider the simplest non-trivial Hecke operator
\begin{gather*}
\Tcal _S (q) := \Gamma _S\diag (1, q, \ldots , q, q^2)\Gamma _S = \{ R\in\Delta _S (q^2); r_q (R)=1\} ,
\end{gather*}
where $r_q$ denotes the rank over $\Z / q\Z$
\begin{lemma}\label{lemma_7} %% Lemma 7
Let $(\ref{gl_1})$ hold and $q$ be a prime, $q\nmid\det S$. Then a set of representatives of $\Gamma _S\backslash \Tcal _S (q)$ is given by 
\begin{itemize}
\item[a)] $\diag (1, q, \ldots , q, q^2)M_{\lambda }, \; \lambda :\Z^{n+2}\mod q$, 
\item[b)] $\diag (q^2, q, \ldots , q, 1)$, 
\item[c)] $\diag (q, 1, q, \ldots ,q, q^2, q) K_{\nu} M_{\lambda ^*}, \; \nu :\Z ^n\mod q, \; \lambda ^*=le_{1}, \; l:\Z\mod q$, 
\item[d)]  $\diag (q, q^2, q, \ldots , q, 1, q) M_{\lambda ^*}, \; \lambda ^* = le_{n+2}, \; l:\Z\mod q$, 
\item[e)]  $\diag (q, q^2, q, \ldots , q, 1, q) \tilde{K}_{\mu} M_{\lambda ^*}, \; \lambda ^*=le_{n+2}, \; l: \Z\mod q, \; \mu : \Z ^n\mod q, \; $  \\
 $\mu\not\equiv 0\mod q,\frac{1}{2}S[\mu ]\equiv 0\mod q $,  
\item[f)]  $\diag (q, q^2, q, \ldots , q, 1, q) \tilde{K}_{\mu _j}K_{re_j}M_{\lambda ^*}, \; \lambda ^*=le_{n+2}, \; l:\Z\mod q, \; \mu _j:\Z ^n\mod q, $ \\
$\mu _j^{tr} S\equiv (\zeta _1, \ldots , \zeta _j, 0, \ldots , 0 )\mod q,  \; r \;\; \text{fixed,} \;\;  r\not\equiv 0\mod q,\; r\zeta _j\equiv -1\mod q, $ \\
$\frac{1}{2} S[\mu _j]\equiv 0\mod q, \; j=1, \ldots , n$.
\end{itemize}
\end{lemma}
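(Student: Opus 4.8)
The plan is to combine the elementary-divisor description of $\Tcal_S(q)$ with the block-triangular normal form from Lemma \ref{lemma_4} a), and then to parametrize the remaining unipotent freedom modulo $\Gamma_S$. First I would make the condition $R\in\Tcal_S(q)$ fully explicit: $R\in\Delta_S(q^2)$ means $S_1[R]=q^2S_1$ and $\det R=q^{n+4}$, while $r_q(R)=1$ says that $R$ has rank $1$ over $\Z/q\Z$. The orthogonality relation $S_1[R]=q^2S_1$ forces the multiset of elementary divisors of $R$ to be invariant under $d\mapsto q^2/d$; since $r_q(R)=1$ makes exactly one of them a unit and $\det R=q^{n+4}$, they must be $1,q,\dots,q,q^2$. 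Thus $\Tcal_S(q)$ is indeed the single double coset $\Gamma_S\diag(1,q,\dots,q,q^2)\Gamma_S$, and I may search for representatives among matrices of this elementary-divisor type; the hypothesis $q\nmid\det S$ is what guarantees both this symmetry (it makes $S_1$ a unit modulo $q$) and that $\Gamma_S$-equivalence on $\Delta_S(q^2)$ is governed by the elementary divisors.

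Next, by Lemma \ref{lemma_4} a) every right coset $\Gamma_SR$ contains a representative
\[
\begin{pmatrix}\alpha & \ast & \ast\\ 0 & \ast & \ast\\ 0 & 0 & \delta\end{pmatrix},\qquad \alpha\delta=q^2,\quad \alpha=\gcd(\text{first column}),
\]
so that $(\alpha,\delta)\in\{(1,q^2),(q,q),(q^2,1)\}$; these three possibilities organize the whole list. The two outer cases are the easy ones. For $\alpha=1$ the first column is primitive, so I can clear it to $e_1$ by suitable $\widetilde M_\lambda$ and $R_K$, match the middle block to $\diag(q,\dots,q)$, and reduce the remaining upper unipotent part modulo the translation lattice, which leaves the representatives $\diag(1,q,\dots,q,q^2)M_\lambda$ with $\lambda$ running through $\Z^{n+2}\bmod q$ --- this is family a). For $\alpha=q^2$ the norm condition $S_1[R]=q^2S_1$ together with $r_q(R)=1$ is so rigid that only the single coset b) survives.

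The heart of the argument, and the source of the families c)--f), is the balanced case $\alpha=\delta=q$. Here the central $(n+2)\times(n+2)$ block induces a scaled $S_0$-orthogonal map on the lattice $L_0$, and the rank-one image of $R$ modulo $q$ may point in genuinely different directions of the $S_0$-lattice: along the first isotropic vector $e_1$, along the last one $e_{n+2}$, or into the anisotropic part carrying $-S$. I would split according to this direction and, using the generators $K_\nu,\widetilde K_\mu$ of $O^+(S_0;\Z)$ together with the translations $M_{\lambda^\ast}$, bring each sub-type into one of the normal forms c)--f). The congruence conditions appearing there --- $\tfrac12S[\mu]\equiv0$, $\mu^{tr}S\equiv(\zeta_1,\dots,\zeta_j,0,\dots,0)$ and $r\zeta_j\equiv-1\pmod q$ --- are precisely the integrality and norm constraints that keep $\tfrac1qR\in SO^+(S_1;\R)$ while preserving $r_q(R)=1$. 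Finally I would confirm that the list is exhaustive and the cosets are pairwise distinct by comparing the invariants $\gcd(\text{first column})$, the reduction $R\bmod q$, and the values of the form on the relevant lattice vectors, and by checking that the total count agrees with $\rho_0=\#(\Gamma_S\backslash\Tcal_S(q))$.

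The main obstacle is exactly this balanced case $\alpha=\delta=q$: the reduction is genuinely two-dimensional in the two isotropic directions and couples to the anisotropic block $-S$, so deriving the sharp congruence conditions and proving that c)--f) are simultaneously disjoint and complete demands careful control of the quadratic form $S\bmod q$ and of the $O^+(S_0;\Z)$-orbits of rank-one isotropic directions. The hypothesis $q\nmid\det S$ is essential precisely here, since it makes $S$ nondegenerate modulo $q$ and thereby regularizes both the orbit structure and the number of solutions of $\tfrac12S[\mu]\equiv0\pmod q$.
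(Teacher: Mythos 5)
Your skeleton coincides with the paper's own: apply Lemma \ref{lemma_4} a), split according to $(\alpha,\delta)\in\{(1,q^2),(q,q),(q^2,1)\}$, dispose of the two outer cases, and isolate the balanced case $\alpha=\delta=q$ as the source of families c)--f). But at exactly that point your argument stops being a proof. You announce that you would ``split according to the direction of the rank-one image'' and ``bring each sub-type into one of the normal forms c)--f)'', and you then concede yourself that deriving the congruence conditions and establishing disjointness and completeness is the ``main obstacle'' demanding ``careful control''. That obstacle is the entire content of the lemma beyond Lemma \ref{lemma_4} a), and it is left undone. The paper carries it out concretely: a balanced-case representative is written as $\diag(q,K,q)M_{\lambda^*}$ with $S_0[K]=q^2S_0$, $r_q(K)=1$, and one reduces recursively on the middle block $K$. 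If the gcd $\sigma$ of the first column of $K$ is $1$ or $q^2$, multiplication by $J^{\downarrow}$ from the embedding (\ref{gl_2}) pushes the problem back to the already settled situation and yields families c) and d); if $\sigma=q$ one passes to the gcd $\sigma'$ of the last column of $K$ and conjugates by $\hat{K}$, which yields e) when $\sigma'=1$, and when $\sigma'=q$ one chooses the maximal $j$ such that the $(j+1)$st column of $K$ is nonzero mod $q$ and applies the procedure to $\hat{K}KK_{e_j}\hat{K}$ --- this is where the conditions $\frac{1}{2}S[\mu_j]\equiv 0$, $\mu_j^{tr}S\equiv(\zeta_1,\dots,\zeta_j,0,\dots,0)$ and $r\zeta_j\equiv-1\pmod q$ in f) actually come from. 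No mechanism of this kind (nor an equivalent one) appears in your proposal, so the families c)--f), their congruence conditions, and their completeness remain unproved.

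Two further points. First, your proposed completeness check ``the total count agrees with $\rho_0=\#(\Gamma_S\backslash\Tcal_S(q))$'' is circular: $\rho_0$ is by definition the number of right cosets, which is precisely what the lemma determines; there is no independent formula to compare against (indeed the paper never evaluates the count $N$, and Theorem \ref{theorem_1} only uses an inequality involving it). Second, be careful about which matrices you may use for left multiplication: $\Gamma_S$ is the discriminant kernel, so $R_K$ is available only for $K$ in the discriminant kernel of $S_0$ (such as $K_\nu$, $\tilde{K}_\mu$), not for arbitrary $K\in O^+(S_0;\Z)$; your phrase ``clear it to $e_1$ by suitable $\widetilde M_\lambda$ and $R_K$'' glosses over exactly the restriction that makes the normalization delicate, and the paper's proof is organized around tools ($M_\lambda$, $\widetilde M_\lambda$, $J^{\downarrow}$, $K_\nu$, $\tilde{K}_\mu$, and conjugation by $\hat{K}$, which normalizes $\Gamma_S$) that respect it. Your opening elementary-divisor argument likewise only gives $\GL_{n+4}(\Z)$-equivalence, not that $\Tcal_S(q)$ is a single $\Gamma_S$-double coset, though this is harmless here since one can work directly with the description $\{R\in\Delta_S(q^2);\;r_q(R)=1\}$.
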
 
\begin{proof}
Apply Lemma \ref{lemma_4} a), which yields $\alpha\delta =q^2$. The cases $\delta =1$ and $\delta =q^2$ are clear by virtue of $r_q (M)=1$. Hence we are left with
\[
M=\begin{pmatrix}
q & 0 & 0 \\
0 & K & 0 \\
0 & 0 & q
\end{pmatrix} M_{\lambda ^*}, \; S_0 [K] = q^2 S_0, \; r_q (K)=1, \; K\lambda ^*:K\Z^{n+2}\mod q.
\]
Let $\sigma$ be the $gcd$ of the first column of $K$. If $\sigma =1$ or $\sigma=q^2$, we apply the above procedure to $MJ^{\downarrow}$ (cf. (2)). If we multiply by $J^{\downarrow}$ from the left, we obtain a representative in $\Gamma _SM$ of the form
\[
\begin{pmatrix}
q & 0 & 0 \\ 0 & K & 0 \\ 0 & 0 & q
\end{pmatrix} M_{\lambda ^*}, \; K=\begin{pmatrix}
1 & 0 & 0 \\ 0 & qI & 0 \\ 0 & 0 & q^2
\end{pmatrix}K_{\nu}, \; \nu :\Z ^n\mod q, \; K=\begin{pmatrix}
q^2 & 0 & 0 \\
0 & qI & 0 \\
0 & 0 & 1
\end{pmatrix}, \; I=I^{(n)}.
\]
Hence we are left with $\sigma = q$. Then $\sigma '$, the $gcd$ of the last column of $K$, is $1$ or $q$. If $\sigma '=1$, we apply the same procedure as above to $\hat{K}K\hat{K}$ and obtain a form
\[
\begin{pmatrix}
q^2 & 0 & 0 \\
0 & qI & 0 \\
0 & 0 & 1
\end{pmatrix} \tilde{K}_\mu =\begin{pmatrix}
q^2 & 0 & 0 \\
q\mu & qI & 0 \\
\frac{1}{2}S[\mu ] & \mu ^{tr}S & 1
\end{pmatrix}
\]
with $\mu:\Z ^n\mod q, \;\; \mu\not\equiv 0\mod q, \; \; \frac{1}{2}S[\mu ]\equiv 0\mod q$ due to $\sigma = q$.
If $\sigma '=q$ we chose $1\leq j\leq n$ maximal such that the $(j+1)$th column of $K$ is $\not\equiv 0\mod q$. Hence we can apply the same procedure as above to $\hat{K}K K_{e_j}\hat{K}$ and obtain a representative
\[
\begin{pmatrix}
q^2 & 0 & 0 \\ 0 & qI & 0 \\ 0 &0 & 1
\end{pmatrix} \tilde{K}_{\mu _j}K_{e_j}= \begin{pmatrix}
q^2 & q^2e_j^{tr}S & q^2\frac{1}{2}S[e_j] \\
q\mu _j & qI + q\mu _je_j^{tr}S & q(e_j+\frac{1}{2}S[e_j]\mu _j) \\
\frac{1}{2}S[\mu _j] & (\mu _j + \frac{1}{2}S[\mu _j]e_j)^{tr}S & \frac{1}{2}S[\mu _j ]\frac{1}{2}S[e_j ]+\mu _j^{tr}Se_j +1 
\end{pmatrix}
\]
where $\mu _j:\Z ^n\mod q, \;  \frac{1}{2}S[\mu _j ]\equiv 0\mod q, \;  \mu _j^{tr}S\equiv (\zeta _1, \ldots , \zeta _j, 0\ldots , 0)\mod q$, and $\zeta _j=\mu _j^{tr}Se_j\equiv -1\mod q$ in order to satisfy the assumptions on the $gcd$ of the columns.  
Clearly these right cosets are mutually distinct.
\end{proof}

We describe an application, which generalizes [3].

\begin{theorem}\label{theorem_1} %% Theorem 1
Let $(\ref{gl_1})$ hold and $k>n+2$ be even. Let $f\in\Mcal _k(\Gamma _S)$ satisfy
\[
f\underset{k}\mid R_K\mid \phi (\tau )=E_{k, ell} (\tau )
\]
for all $K=(u, \ldots , v)\in O^+ (S_1, \Q ), \; u\in L_0^{\#}, \; gcd (S_0 u)=1$. If $q$ is a prime, $q\nmid \det S$ such that 
\[
f\underset{k}\mid \Tcal _S (q) = \rho f
\]
for some $\rho\in\C$, then
\[
f=E_{k, S}.
\]
\end{theorem}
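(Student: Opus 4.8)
The plan is to study the difference $g:=f-E_{k,S}$ and to show that it vanishes identically, which gives $f=E_{k,S}$ at once. First I would check that $g$ is a cusp form. By Lemma~\ref{lemma_3} we have $E_{k,S}\underset{k}\mid R_K\mid\phi=E_{k,ell}$ for every admissible $K$, and by hypothesis $f\underset{k}\mid R_K\mid\phi=E_{k,ell}$ for the same $K$. Since both the slash action and the $\phi$-operator are linear, $g\underset{k}\mid R_K\mid\phi\equiv 0$ for all $K=(u,\ldots,v)\in O^+(S_1;\Q)$ with $u\in L_0^{\#}$, $u\geq 0$ and $gcd(S_0u)=1$, so Lemma~\ref{lemma_2} yields $g\in\Ccal_k(\Gamma_S)$.

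Next I would pin down the eigenvalue. By Lemma~\ref{lemma_5} one has $E_{k,S}\underset{k}\mid\Tcal_S(q)=\rho_kE_{k,S}$, and by hypothesis $f\underset{k}\mid\Tcal_S(q)=\rho f$; linearity of the Hecke operator then gives
\[
g\underset{k}\mid\Tcal_S(q)-\rho g=(\rho-\rho_k)E_{k,S}.
\]
The left-hand side is a cusp form by the previous step and (\ref{gl_11}), hence its image under $R_K\mid\phi$ vanishes by Lemma~\ref{lemma_2}, whereas the right-hand side maps to $(\rho-\rho_k)E_{k,ell}$ by Lemma~\ref{lemma_3}. As $E_{k,ell}\not\equiv 0$ (its constant term is $1$), this forces $\rho=\rho_k$, so that $g\underset{k}\mid\Tcal_S(q)=\rho_kg$; that is, $g$ is a cuspidal Hecke eigenform with eigenvalue $\rho_k$.

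Finally I would derive a contradiction from the eigenvalue estimate. If $g\neq 0$, then Lemma~\ref{lemma_6} with $r=q^2$ gives $|\rho_k|\leq q^{-k}\rho_0$. To refute this I would read off the $\delta$-values from the explicit cosets of Lemma~\ref{lemma_7}: by Lemma~\ref{lemma_4}~a) the entry $\delta$ of a coset equals $q^2/\alpha$ with $\alpha$ the $gcd$ of its first column, and inspecting the first columns shows that type~a) has $\delta=q^2$ (with $q^{n+2}$ cosets), type~b) has $\delta=1$ (a single coset), while types c)--f) all have $\delta=q$. The $\delta=q$ contributions cancel against the matching terms of $q^{-k}\rho_0$, leaving
\[
\rho_k-q^{-k}\rho_0=(1-q^{-k})\bigl(1-q^{\,n+2-k}\bigr)>0,
\]
where the strict positivity is exactly the hypothesis $k>n+2$. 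Since $\rho_k$ is real and positive this contradicts Lemma~\ref{lemma_6}, forcing $g\equiv 0$ and hence $f=E_{k,S}$. I expect this last step to be the main obstacle: one must correctly assign to each of the six coset types its value of $\delta$ and notice that the many $\delta=q$ cosets contribute nothing to $\rho_k-q^{-k}\rho_0$, so that the whole estimate collapses to the single comparison $1>q^{\,n+2-k}$, which is precisely where the weight bound $k>n+2$ is used.
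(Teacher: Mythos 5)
Your proposal is correct and follows essentially the same route as the paper: set $g=f-E_{k,S}$, use Lemmas \ref{lemma_2} and \ref{lemma_3} to see that $g$ is a cusp form, identify $\rho=\rho_k$ (the paper does this via $\alpha_f(0)\neq 0$ and Lemma \ref{lemma_5}, you via applying $R_K\mid\phi$ to the identity $g\underset{k}\mid\Tcal_S(q)-\rho g=(\rho-\rho_k)E_{k,S}$, which is an equivalent minor variant), and then contradict Lemma \ref{lemma_6} using the coset count of Lemma \ref{lemma_7}. Your explicit factorization $\rho_k-q^{-k}\rho_0=(1-q^{-k})(1-q^{n+2-k})>0$ is exactly the paper's inequality $1+q^{n+2-2k}+q^{-k}N\leq q^{-k}(1+q^{n+2}+N)$ made transparent, with the correct $\delta$-values assigned to each coset type.
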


\begin{proof}
Due to Lemma \ref{lemma_2} and \ref{lemma_3} $g:= f-E_{k, S}$ is a cusp form. As $\alpha _f (0)\neq 0$ we obtain $\rho = \rho _k$ from Lemma \ref{lemma_5}. Then Lemma \ref{lemma_6} yields
\[
\rho _k\leq q^{-k}\rho_0
\]
If $N$ denote the number of right cosets in Lemma \ref{lemma_7} c)-f), this means 
\[
1+q^{n+2-2k}+q^{-k}N\leq q^{-k} (1+q^{n+2} +N)
\]
In view of $k>n+2$, this is a contradition and $g\equiv 0$ follows.
\end{proof}

\begin{remark}\label{remark_1} %% Remark 1
a) \; The proof of Lemma $7$ shows for a prime $q$ that
\begin{align*}
\Tcal _S^* (q) & = \{ R\in\Delta _S (q^2); r_q (R) = 2\} \\
& = \Gamma _S\diag (1, 1, q, \ldots , q, q^2, q^2) \Gamma _S.
\end{align*}
b) \; If we consider the isomorphism of $\Gamma _S$ with the Siegel resp. the Hermitation modular group $\Gamma$ of degree $2$ in [9] resp. [18], we conclude that $\Tcal _S (q)$ resp. $\Tcal _S^* (q)$ corresponds to
\[
\Gamma \diag (1, 1, q, q)\Gamma \quad \text{resp.} \quad \Gamma \diag (1, q, q^2, q) \Gamma .
\]
c) \; The Hecke operators $\Tcal _S (p), \; \Tcal _S (q)$ for primes $p$ and $q$ commute and are self-adjoint with respect to the Petersson scalar product. \\
d)\; If we consider $q|\det S$, there may appear more right cosets than quoted in Lemma \ref{lemma_7}. This is already true in the case $n=1$, where we deal with the paramodular group of degree \ref{gl_2} (cf. [5]). \\
e) \; It is possible to derive Theorem \ref{theorem_1} under the weaker assumption $\alpha _f (0)=1$. In this case one has to assume that $f$ is an eigenform under infinitely many $q$. \\
f) \; As representatives of $\Gamma _S\backslash O^+ (S_1; \Z )$ may be chosen in the form $R_K$ in (\ref{gl_4}), we conclude
\[
E_{k, S}\in\Mcal _k (O^+ (S_1; \Z )) \quad \text{for even} \quad k>n+2.
\] 
\end{remark}

\section{The Maaß space}
\label{sect_4}

$f\in\Mcal _k (\Gamma _S)$ belongs to the \textit{Maaß space} $\Mcal _k^* (\Gamma _S)$ if

\begin{gather*}\tag{11}\label{gl_11}
\alpha _f (\lambda )=\sum _{d\mid \epsilon (\lambda )} d^{k-1} \alpha _f \begin{pmatrix}
1 \\ \mu /d \\ lm / d^2
\end{pmatrix} \quad \text{for all} \;\; 0\neq \lambda=\begin{pmatrix}
m \\ \mu \\ l
\end{pmatrix}\in L_0^{\#},
\end{gather*}
where $\epsilon (\lambda ):= gcd (S_0 \lambda )$. 
Considering Lemma \ref{lemma_2} and the Fourier expansion of the elliptic Eisenstein series (cf. [14]), we note that any $f\in\Mcal _k^* (\Gamma _S)$ and any $K=(\lambda , *,\ldots ,*)\in O^+ (S_1; Q), \; \lambda\in L_0^{\#}, \; \epsilon (S_0\lambda )=1$ satisfy
\begin{gather*}\tag{12}\label{gl_12}
f\underset{k}\mid R_K\mid \phi (\tau )=\alpha _f (0) E_{k, ell} (\tau ), \; \frac{-2k}{B_k}\alpha _f (0)=\alpha _f (\lambda ).
\end{gather*}
We define the $p$-\textit{Maaß condition}\index{Maaß condition} for a prime $p$ by
\begin{gather*}\tag{13}\label{gl_13}
\begin{cases}
  & \alpha _f \begin{pmatrix}
  p^rm \\ \mu \\ l
  \end{pmatrix}= \sum\limits_{j=0}^r p^{j(k-1)} \alpha _f\begin{pmatrix}
  m \\ p^{-j}\mu \\ p^{r-2j}\lambda
  \end{pmatrix}, \\
  & \alpha _f \begin{pmatrix}
  0 \\ 0 \\ p^rm
  \end{pmatrix}= \left( \sum\limits_{j=0}^r p^{j(k-1)}\right)\alpha _f \begin{pmatrix}
  0 \\ 0 \\ m
  \end{pmatrix}
 \end{cases}
\end{gather*}
for all such $\lambda$ with $p\nmid m$, where we set $\alpha _f (\lambda )=0$ if $\lambda\not\in L_0^{\#}$.
Instead of the symmetric relation in [12], we are going to describe the Maaß condition via Hecke operators associated with the minimal parabolic subgroup
\begin{align*}
\Gamma _S^P & =\left\{ M\in\Gamma _S; \; M-I \; \text{strictly upper triangular} \right\} \\
& = \left\{ K_{\mu}M_{\lambda}; \; \mu\in\Z ^n, \; \lambda\in\Z^{n+2}\right\} .
\end{align*}
Now we embed the classical elliptic Hecke operators via (\ref{gl_2}) into the orthogonal group. For a prime $p$ set
\begin{gather*}\tag{14}\label{gl_14}
f\underset{k}\mid T_p^{\downarrow}:= f\underset{k}\mid \diag (p^2, p^2, p, \ldots , p,  1)+\sum _{l\mod p} f\underset{k}\mid \diag (p, p^2, p, \ldots , p, 1, p) M_{le_{n+2}},
\end{gather*}
\begin{gather*}\tag{15}\label{gl_15}
f\underset{k}\mid T_p^{\uparrow}:= f\underset{k}\mid \diag (p^2, p^2, p, \ldots , p, 1, 1)+\sum _{l\mod p}f\underset{k}\mid (pI)\cdot M_{l/p e_1}.
\end{gather*}

\begin{theorem}\label{theorem_2} %% Theorem 2
Let $(\ref{gl_1})$ hold and $f\in\Mcal _k (\Gamma _S)$ for an even $k$. Then the following assertions are equivalent.  
\begin{itemize}
\item[(i)] $f$ belongs to the Maaß space. 
\item[(ii)] $f$ satisfies the $p$-Maaß condition for every prime $p$. 
\item[(iii)]$\alpha _f \begin{pmatrix}
pm \\ \mu \\ l
\end{pmatrix} +p^{k-1} \alpha _f \begin{pmatrix}
m/p \\ \mu /p \\ l
\end{pmatrix} = \alpha _f\begin{pmatrix}
m \\ \mu \\ pl
\end{pmatrix} + p^{k-1} \alpha _f \alpha _f \begin{pmatrix}
m \\ \mu /p \\ l/ p
\end{pmatrix}$ for all $m, l, \mu$ and all primes $p$. 
\item[(iv)]$f\underset{k}\mid T_p^{\uparrow} = f\underset{k}\mid T_p^{\downarrow}$ for all primes $p$. 
\end{itemize}
\end{theorem}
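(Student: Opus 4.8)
The plan is to prove the chain of equivalences $(\mathrm{iv})\Leftrightarrow(\mathrm{iii})\Leftrightarrow(\mathrm{ii})\Leftrightarrow(\mathrm{i})$, since this separates the single genuine computation — the passage from the operator identity $(\mathrm{iv})$ to the Fourier identity $(\mathrm{iii})$ — from the two arithmetic equivalences among the coefficient conditions. Throughout I write $f(w)=\sum_\lambda\alpha_f(\lambda)e^{2\pi i\lambda^{tr}S_0w}$ with $\lambda=(m,\mu,l)^{tr}$ and record at the outset that, for $\lambda\in L_0^{\#}=S_0^{-1}\Z^{n+2}$, the outer entries $m,l$ are integers while $\mu$ lies in the $S$-dual lattice $S^{-1}\Z^n$; this is what turns the averages over $l\bmod p$ in $(\ref{gl_14})$ and $(\ref{gl_15})$ into clean geometric sums. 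I will also use that the $\Gamma_S$-modularity of $f$ yields the Jacobi-type invariance $\alpha_f(m,\mu,l)=\alpha_f\!\left(m,\mu+m\sigma,l+\sigma^{tr}S\mu+\frac{m}{2}S[\sigma]\right)$ for $\sigma\in\Z^n$, coming from the elements $\tilde K_\sigma$ in $(\ref{gl_4})$ together with $\alpha_f(K\lambda)=\alpha_f(\lambda)$ for the orthogonal $R_K\in\Gamma_S$.

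For $(\mathrm{iv})\Leftrightarrow(\mathrm{iii})$ I would expand both operators from their representatives. A diagonal representative $D\in\Delta_S(r)$ acts through its normalisation $r^{-1/2}D\in SO^+(S_1;\R)$, so that $f\underset{k}\mid D$ carries the overall factor $r^{-k/2}$ against the automorphy factor $D\{w\}^{-k}$ and rescales the Fourier index by the diagonal entries, while a translation $M_\sigma$ multiplies the coefficient at $\nu$ by $e^{2\pi i\,\nu^{tr}S_0\sigma}$; summing the translations over $l\bmod p$ then produces a factor $p$ times a divisibility indicator. Carrying this out, I expect the leading diagonal representative of $f\underset{k}\mid T_p^{\downarrow}$ to contribute $\alpha_f(pm,\mu,l)$ and its averaged translated family to contribute $p^{k-1}\alpha_f(m/p,\mu/p,l)$, and symmetrically $f\underset{k}\mid T_p^{\uparrow}$ to contribute $\alpha_f(m,\mu,pl)$ and $p^{k-1}\alpha_f(m,\mu/p,l/p)$; equating the two Fourier expansions coefficient by coefficient is then exactly $(\mathrm{iii})$. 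The delicate points are bookkeeping the factor $r^{-k/2}$ so that the correct weight $p^{k-1}$ survives, and handling the fractional translation $M_{(l/p)e_1}$ of $(\ref{gl_15})$, whose effect on $\tau$ is a shift by a $p$-division point and whose average over $l$ is the source of the indicator that pairs the two families across the two operators.

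The equivalence $(\mathrm{ii})\Leftrightarrow(\mathrm{iii})$ is formal: reading $(\mathrm{iii})$ with $p\nmid m$ already gives the case $r=1$ of the $p$-Maaß condition $(\ref{gl_13})$, and applying $(\mathrm{iii})$ with $m$ replaced by $p^rm$ and then inserting the inductive hypothesis for $r$ and $r-1$ telescopes the three resulting sums into $\sum_{j=0}^{r+1}p^{j(k-1)}\alpha_f(m,p^{-j}\mu,p^{r+1-2j}l)$, the converse being the same computation run backwards. For $(\mathrm{ii})\Leftrightarrow(\mathrm{i})$ I would unroll the recursion prime by prime to bring the first entry down to $1$; since $\epsilon(\lambda)=\gcd(S_0\lambda)$ factors into coprime prime powers and the local reductions commute, the iterated local sums reassemble into the single divisor sum of the Maaß relation $(\ref{gl_11})$, the Jacobi-type invariance above being what reconciles the summation ranges and guarantees that the order of the prime reductions is immaterial. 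The degenerate vectors $\lambda=(0,0,p^rm)^{tr}$ fall outside the hypothesis $p\nmid m$ and are matched separately against the second line of $(\ref{gl_13})$.

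I expect the Fourier computation underlying $(\mathrm{iv})\Leftrightarrow(\mathrm{iii})$ to be the main obstacle: one must track the normalisation $r^{-k/2}$, the index rescaling and the Gauss sums simultaneously through all the representatives of $(\ref{gl_14})$ and $(\ref{gl_15})$ and verify that the integral shift in $T_p^{\downarrow}$ and the fractional shift in $T_p^{\uparrow}$ conspire to give the \emph{symmetric} four-term identity $(\mathrm{iii})$ rather than an asymmetric variant. A secondary but real subtlety, entering $(\mathrm{ii})\Leftrightarrow(\mathrm{i})$, is to control the divisor sum when several primes divide $\epsilon(\lambda)$ at once, so that the iterated local reductions, carried out with the help of the Jacobi-type relations, are genuinely independent of the order in which the primes are removed.
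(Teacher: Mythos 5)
Your outline coincides with the paper's proof (the same chain of equivalences, with (iii)$\Leftrightarrow$(iv) by comparing Fourier coefficients and (iii)$\Rightarrow$(ii) by induction on $r$), but two of the computations you predict are wrong, and the second is a genuine gap. First, your term-by-term assignment in (iv)$\Leftrightarrow$(iii) is backwards, and impossibly so. In $T_p^{\uparrow}$ the translated family is $(pI)M_{(l/p)e_1}$, whose linear part is the scalar $pI$; hence $f\underset{k}\mid (pI)M_{(l/p)e_1}(w)=p^{-k}f\bigl(w+\tfrac{l}{p}e_1\bigr)$ does not rescale Fourier indices at all, and summing over $l\bmod p$ yields $p^{1-k}\alpha_f\bigl((m,\mu,l)^{tr}\bigr)$ times the indicator that $p\mid l$, i.e.\ a multiple of the coefficient at the \emph{same} index. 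No bookkeeping can turn this into your claimed $p^{k-1}\alpha_f\bigl((m,\mu/p,l/p)^{tr}\bigr)$: a one-parameter family of translations only ever produces a divisibility condition on a single linear form of the index, never division of the index itself. Likewise in $T_p^{\downarrow}$ the translated family $\diag (p,p^2,p,\dots ,p,1,p)M_{le_{n+2}}$ rescales indices by $(m,\mu ,l)\mapsto (pm,\mu ,l/p)$ and so contributes $p^{1-k}\alpha_f\bigl((pm,\mu ,l)^{tr}\bigr)$ at the index $(m,\mu ,pl)^{tr}$; the two ``divided'' terms $\alpha_f\bigl((m/p,\mu /p,l)^{tr}\bigr)$ and $\alpha_f\bigl((m,\mu /p,l/p)^{tr}\bigr)$ come from the purely diagonal representatives, with weight $1$ since their $\delta$-entry equals $1$. (For this to work the first matrix in (\ref{gl_14}) must be read as $\diag (p^2,p,\dots ,p,1)$ with \emph{all} middle entries equal to $p$ --- as printed, with a second $p^2$, the two operators would share their diagonal term and (iv) would collapse to the false two-term identity $\alpha_f((pm,\mu,l)^{tr})=\alpha_f((m,\mu,pl)^{tr})$.) Comparing coefficients at $(m,\mu ,pl)^{tr}$ then gives exactly (iii) multiplied by $p^{1-k}$, so the equivalence survives, but not by the mechanism or with the weights you describe.

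The serious gap is the telescoping induction for (iii)$\Rightarrow$(ii). In the inductive step you substitute the level-$r$ identity (\ref{gl_13}) into the term $\alpha_f\bigl((p^{r}m,\mu /p,l/p)^{tr}\bigr)$. When $p\nmid l$ this index lies outside $L_0^{\#}$, so the coefficient is $0$ by convention, yet the formal expansion you substitute for it does not vanish: already in the step from $r=1$ to $r=2$ it equals $\alpha_f\bigl((m,\mu /p,l)^{tr}\bigr)$. The substitution is therefore unjustified, and indeed the identity your telescoping would establish --- (\ref{gl_13}) read literally, with the vanishing convention doing all the truncation --- is false for genuine Maa\ss{} forms: for $\lambda =(p^2,0,1)^{tr}$ one has $\epsilon (\lambda )=1$, so (\ref{gl_11}) gives $\alpha_f(\lambda )=\alpha_f\bigl((1,0,p^2)^{tr}\bigr)$, whereas the telescoped formula carries the extra term $p^{k-1}\alpha_f\bigl((1,0,1)^{tr}\bigr)$, which is nonzero e.g.\ for $E_{k,S}$ with $\det S=1$ by Corollaries 4 and 5. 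The point is that the $j$-sum in the $p$-Maa\ss{} condition must be truncated at $j\leq \min\bigl(r,\,v_p(l),\,v_p(S\mu )\bigr)=v_p(\epsilon (\lambda ))$, $v_p$ the $p$-adic valuation: the convention $\alpha_f=0$ off $L_0^{\#}$ enforces the $\mu$-constraint but not the $l$-constraint, because $p^{r-2j}l$ can be integral without $p^{j}\mid l$. With that corrected reading, (i)$\Leftrightarrow$(ii) really is a regrouping of the divisor sum by prime powers (your prime-by-prime unrolling, which needs no Jacobi-type invariance), but (iii)$\Rightarrow$(ii) requires an induction that tracks when the boundary terms genuinely vanish rather than the formal cancellation you propose; the paper's one-line ``induction on $r$'' presupposes exactly this reading and hides the same subtlety.
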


\begin{proof} 
(i) $\Leftrightarrow$ (ii) This is obvious from (\ref{gl_11}) and (\ref{gl_13}). \\
(ii) $\Rightarrow$ (iii) Merely apply (\ref{gl_13}) to both sides of $(iii)$. \\
(iii) $\Rightarrow$ (ii)  Use an induction on $r$. \\
 (iii) $\Leftrightarrow$ (iv) Compute the Fourier coefficient of $\begin{pmatrix}
  m \\ \mu \\ pl
  \end{pmatrix}$ for $f\mid T_p^{\uparrow}$ and $f\mid T_p^{\downarrow}$.  Then the claim follows.
\end{proof}
A sightly weaker assumption is dealt with in

\begin{corollary}\label{corollary_1} %% Corollary 1
Let $(\ref{gl_1})$ hold, $f\in\Mcal _k (\Gamma _S)$, $k$ even and $q$ be a prime. Then the following assertions are equivalent: \\
(i) \;$ f\in\Mcal ^*_k (\Gamma _S)$. \\
(ii)\; $f$ satisfies the $p$-Maaß condition for every prime $p\neq q$.
\end{corollary}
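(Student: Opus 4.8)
The plan is to handle the two implications separately, the real content lying in (ii) $\Rightarrow$ (i). The implication (i) $\Rightarrow$ (ii) needs no argument: if $f\in\Mcal_k^*(\Gamma_S)$, then Theorem \ref{theorem_2} guarantees that $f$ satisfies the $p$-Maaß condition for \emph{every} prime, hence in particular for every $p\neq q$.

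For (ii) $\Rightarrow$ (i) the first point to record is that the equivalences (ii) $\Leftrightarrow$ (iii) $\Leftrightarrow$ (iv) in Theorem \ref{theorem_2} hold one prime at a time: the passage (ii) $\Rightarrow$ (iii) inserts (\ref{gl_13}) only for the prime under consideration, (iii) $\Rightarrow$ (ii) is an induction on $r$ for that same prime, and (iii) $\Leftrightarrow$ (iv) compares a single Fourier coefficient at $p$. Thus the hypothesis already delivers
\[
f\underset{k}\mid T_p^{\uparrow}=f\underset{k}\mid T_p^{\downarrow}\qquad\text{for all primes }p\neq q ,
\]
and, by Theorem \ref{theorem_2}, it remains only to prove the single identity $f\underset{k}\mid T_q^{\uparrow}=f\underset{k}\mid T_q^{\downarrow}$; together with the above this is (iv) for all primes, hence (i).

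To isolate the possible failure at $q$ I would put $g:=f\underset{k}\mid T_q^{\uparrow}-f\underset{k}\mid T_q^{\downarrow}\in\Mcal_k(\Gamma_S)$ and aim at $g\equiv 0$. The operators defined by (\ref{gl_14}) and (\ref{gl_15}) at the distinct primes $p$ and $q$ are assembled from Hecke cosets supported at different primes and therefore commute, exactly as $\Tcal_S(p)$ and $\Tcal_S(q)$ do in Remark \ref{remark_1} c). Feeding the relations $f\underset{k}\mid T_p^{\uparrow}=f\underset{k}\mid T_p^{\downarrow}$ of the previous paragraph into this commutativity yields $g\underset{k}\mid T_p^{\uparrow}=g\underset{k}\mid T_p^{\downarrow}$ for every $p\neq q$, so $g$ inherits the $p$-Maaß condition for all $p\neq q$. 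The point of replacing $f$ by $g$ is that $g$ should be degenerate in its lowest Fourier data: since $T_q^{\uparrow}$ and $T_q^{\downarrow}$ stem from the two $\SL_2$-embeddings (\ref{gl_2}), they act on the isotropic coefficients extracted by $R_K\mid\phi$ (cf. Lemma \ref{lemma_2}) through one and the same classical Hecke operator $T_q$ on $\Mcal_k(\SL_2(\Z))$, so their difference should kill these ``index one'' coefficients of $g$.

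The main obstacle is to turn ``$g$ is Maaß away from $q$ with vanishing lowest coefficients'' into ``$g\equiv 0$''. The natural mechanism is injectivity of the Maaß lift: a form in $\Mcal_k^*(\Gamma_S)$ is determined by its first Fourier--Jacobi coefficient, so once $g$ is known to lie in the Maaß space, the vanishing just described forces $g\equiv 0$. The delicate part is precisely to certify that $g$ satisfies the Maaß condition \emph{at} $q$ as well --- equivalently, that the two embedded $\SL_2$-Hecke actions, which agree away from $q$, also agree at $q$ on $g$ --- and I would seek this via the symmetry interchanging the two isotropic directions $\tau$ and $\tau'$ inside $O^+(S_1;\Z)$, which is what couples $T_q^{\uparrow}$ to $T_q^{\downarrow}$. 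Should this stall, a fallback is to decompose $g$ into simultaneous eigenforms of the commuting self-adjoint family $\{\Tcal_S(p)\}_p$ and invoke the growth estimate of Lemma \ref{lemma_6} at $q$ against the eigenvalue identities already secured at all $p\neq q$, ruling out a nonzero cuspidal part in the spirit of the contradiction in the proof of Theorem \ref{theorem_1}.
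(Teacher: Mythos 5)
Your implication (i) $\Rightarrow$ (ii) is fine, and reducing (ii) $\Rightarrow$ (i) to the single identity $f\underset{k}\mid T_q^{\uparrow}=f\underset{k}\mid T_q^{\downarrow}$ is a correct restatement of the goal. From there, however, the argument has a genuine gap, which you in fact concede. First, a structural problem: $g:=f\underset{k}\mid T_q^{\uparrow}-f\underset{k}\mid T_q^{\downarrow}$ does not lie in $\Mcal_k(\Gamma_S)$ in general. The operators (\ref{gl_14}), (\ref{gl_15}) are finite sums over fixed matrices and are Hecke operators only with respect to the minimal parabolic subgroup $\Gamma_S^P$ (resp. $\hat K\Gamma_S^P\hat K$), as the proof of Corollary \ref{corollary_2} makes explicit; they do not preserve $\Gamma_S$-modularity. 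Second, and decisively, neither of your proposed mechanisms for $g\equiv 0$ closes. Injectivity of the Maaß lift presupposes that $g$ already lies in the Maaß space, i.e.\ satisfies the $q$-Maaß condition --- exactly the kind of statement you set out to prove, so this is circular, as you acknowledge. The fallback via eigenform decomposition and Lemma \ref{lemma_6} also fails: that lemma concerns the genuine $\Gamma_S$-double-coset operators of Lemma \ref{lemma_5}, not $T_q^{\uparrow}$, $T_q^{\downarrow}$, and unlike in Theorem \ref{theorem_1} you have no eigenvalue identity at $q$ in the hypotheses to play off against the bound.

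The missing ingredient is a rigidity mechanism that upgrades ``Maaß away from $q$ plus vanishing lowest Fourier--Jacobi coefficient'' to identical vanishing \emph{without} ever verifying the $q$-Maaß condition for the auxiliary form. The paper's proof supplies it as follows: set $g:=f-f^{M}$, where $f^{M}$ is the Maaß lift of the first Fourier--Jacobi coefficient of $f$; this $f^{M}$ is a genuine element of $\Mcal_k^*(\Gamma_S)$, so $g\in\Mcal_k(\Gamma_S)$ (unlike your $g$). Both $f$ and $f^{M}$ satisfy the $p$-Maaß condition for all $p\neq q$, and the coefficients of $g$ with first entry $1$ vanish by construction; iterating (\ref{gl_13}) over the primes $p\neq q$ then annihilates every $\alpha_g(\lambda)$ whose relevant entry is prime to $q$, so the surviving Fourier support forces the extra periodicity
\[
g\bigl(w+\tfrac{1}{q}e_1\bigr)=g(w).
\]
If $g\not\equiv 0$, the group generated by $\Gamma_S$ and this fractional translation would be a discrete extension of $\Gamma_S$ not contained in $SO^+(S_1;\Z)$, contradicting the uniqueness of the maximal discrete extension established in [19]. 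Hence $g\equiv 0$ and $f=f^{M}\in\Mcal_k^*(\Gamma_S)$. This appeal to [19] (or some substitute for it) is indispensable, and it is precisely what your proposal lacks.
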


\begin{proof}
Let $f$ satisfy $(ii)$ and let $f^M$ be the Maaß lift of the first Fourier-Jacobi coefficient of $f$. Consider
\[
g:= f-f^M\in\Mcal _k (\Gamma _S).
\]
Due to Theorem \ref{theorem_1} $g$ possesses a Fourier expansion of the form
\[
g(w)=\sum _{\lambda=\begin{pmatrix}
m \\ \mu \\ pl
\end{pmatrix}\in L_0^{\#}, \lambda \geq 0} \alpha _g (\lambda )e^{2\pi i\lambda^{tr}S_0 w}.
\]
Hence we additionally obtain
\[
g(w+\frac{1}{p}e_{1})=g(w).
\]
Then $g\equiv 0$ follows from [19], because $SO ^+(S_1; \Z )$ is the unique maximal discrete extension of $\Gamma _S$ in $SO^+ (S_1; \R )$.
\end{proof}
The next application concerns the Hecke operator $\Tcal _S (q)$.

\begin{corollary}\label{corollary_2} %% Corollary 2
Let $(\ref{gl_1})$ hold. Then the Maaß space is invariant under all Hecke operators $\Tcal _S (q), q$ prime, $q\nmid\det S$.
\end{corollary}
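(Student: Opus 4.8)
The plan is to show that the Maaß space $\Mcal_k^*(\Gamma_S)$ is stable under each $\Tcal_S(q)$ with $q\nmid\det S$ by combining the Hecke-eigenform characterization of the Maaß condition (Theorem~\ref{theorem_2}, especially the operator form (iv)) with the commutativity of the relevant Hecke operators. Concretely, for $f\in\Mcal_k^*(\Gamma_S)$ I want to prove that $g:=f\underset{k}\mid\Tcal_S(q)$ again satisfies the $p$-Maaß condition for every prime $p$, equivalently $g\underset{k}\mid T_p^{\uparrow}=g\underset{k}\mid T_p^{\downarrow}$ for all $p$. Since by (\ref{gl_11}) Hecke operators preserve $\Mcal_k(\Gamma_S)$, we already know $g\in\Mcal_k(\Gamma_S)$, so only the Maaß relations must be verified.

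First I would separate the primes into two cases. For a prime $p\neq q$, the operators $T_p^{\uparrow},T_p^{\downarrow}$ (built from the embeddings (\ref{gl_2}) with $p$-power diagonal entries) and $\Tcal_S(q)$ are supported on coprime determinant indices $p^{\bullet}$ and $q^2$; the underlying double cosets then commute within the Hecke algebra $(\Gamma_S,\Delta_S)$, exactly as asserted for $\Tcal_S(p),\Tcal_S(q)$ in Remark~\ref{remark_1}c). Hence $g\underset{k}\mid T_p^{\uparrow}=f\underset{k}\mid T_p^{\uparrow}\underset{k}\mid\Tcal_S(q)=f\underset{k}\mid T_p^{\downarrow}\underset{k}\mid\Tcal_S(q)=g\underset{k}\mid T_p^{\downarrow}$, using Theorem~\ref{theorem_2}(iv) for $f$ in the middle equality. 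This disposes of all primes $p\neq q$ at once, so by Corollary~\ref{corollary_1} it already follows that $g$ lies in the Maaß space. Thus the remaining prime $p=q$ does not even need to be treated separately: Corollary~\ref{corollary_1} only requires the $p$-Maaß condition for $p\neq q$.

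The main obstacle is justifying the commutativity $T_p^{\uparrow}\,\Tcal_S(q)=\Tcal_S(q)\,T_p^{\uparrow}$ (and the same for $T_p^{\downarrow}$) as operators on $\Mcal_k(\Gamma_S)$, rather than merely for the single operator $\Tcal_S(q)$ handled in Remark~\ref{remark_1}c). I would argue that $T_p^{\uparrow}$ and $T_p^{\downarrow}$ are $\C$-linear combinations of genuine double-coset operators $f\underset{k}\mid\Gamma_S R\Gamma_S$ with $R\in\Delta_S(p^{\bullet})$, so their commuting with $\Tcal_S(q)$ reduces to commuting of the double cosets of coprime index $p^{\bullet}$ and $q^2$. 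For coprime indices the local Hecke algebras at $p$ and $q$ are independent tensor factors, whence every element supported at $p$ commutes with every element supported at $q$; the self-adjointness and commutativity recorded in Remark~\ref{remark_1}c) encapsulate precisely this local decomposition. I would spell out that $T_p^{\uparrow}$, $T_p^{\downarrow}$ indeed decompose into such double-coset operators by grouping the right cosets in (\ref{gl_14})–(\ref{gl_15}) according to the orbits of $\Gamma_S$ acting from the left, which is the only slightly technical bookkeeping step.

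Finally I would assemble the argument: given $f\in\Mcal_k^*(\Gamma_S)$, set $g:=f\underset{k}\mid\Tcal_S(q)\in\Mcal_k(\Gamma_S)$ by (\ref{gl_11}); for every prime $p\neq q$ the commutation just established together with Theorem~\ref{theorem_2}(iv) for $f$ gives $g\underset{k}\mid T_p^{\uparrow}=g\underset{k}\mid T_p^{\downarrow}$, i.e.\ $g$ satisfies the $p$-Maaß condition for all $p\neq q$; Corollary~\ref{corollary_1} then yields $g\in\Mcal_k^*(\Gamma_S)$, which is the assertion. An alternative, more computational route would verify the Fourier-coefficient identity (\ref{gl_11}) for $g$ directly, by inserting the explicit coset representatives of $\Tcal_S(q)$ from Lemma~\ref{lemma_7} into the Maaß relation for $f$ and checking that the divisor-sum structure is preserved; I would keep this in reserve as a cross-check but prefer the operator-theoretic argument since it avoids the lengthy $q$-by-$q$ coefficient computation.
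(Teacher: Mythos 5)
Your overall frame coincides with the paper's: both reduce the claim, via Theorem~\ref{theorem_2}(iv) and Corollary~\ref{corollary_1}, to showing that $T_p^{\uparrow}$ and $T_p^{\downarrow}$ commute with $\Tcal_S(q)$ for primes $p\neq q$. But the step you yourself flag as ``the main obstacle'' contains a genuine gap, and your proposed fix cannot work. The operators $T_p^{\uparrow}$, $T_p^{\downarrow}$ are \emph{not} $\C$-linear combinations of $\Gamma_S$-double-coset operators. Indeed, the $p+1$ matrices appearing in (\ref{gl_14}) are precisely representatives of type b) and d) in Lemma~\ref{lemma_7} (with $q=p$), so they all lie in the \emph{single} double coset $\Tcal_S(p)$, which has $1+p^{n+2}+N$ right cosets. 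Since right multiplication by $\Gamma_S$ permutes the right cosets inside one double coset transitively, no proper sub-collection of them is stable under $\Gamma_S$; hence $f\underset{k}\mid T_p^{\downarrow}$ is in general not $\Gamma_S$-invariant, and no ``grouping of the right cosets in (\ref{gl_14})--(\ref{gl_15}) according to $\Gamma_S$-orbits'' can assemble them into full double cosets. (The same happens for $T_p^{\uparrow}$, whose matrices lie in $\Tcal_S^{*}(p)$ together with the central coset of $pI$.) Consequently Remark~\ref{remark_1}c), which concerns honest elements of the Hecke pair $(\Gamma_S,\Delta_S)$, gives you nothing here; worse, the middle term $f\underset{k}\mid T_p^{\uparrow}\underset{k}\mid \Tcal_S(q)$ in your chain of equalities is not even well defined, because applying the $\Gamma_S$-double-coset operator $\Tcal_S(q)$ to the non-$\Gamma_S$-invariant function $f\underset{k}\mid T_p^{\uparrow}$ depends on the choice of right-coset representatives.

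This is exactly the difficulty the paper's proof is built to circumvent: there the commutation is established not in the Hecke algebra of $\Gamma_S$ but in the Hecke algebras attached to the minimal parabolic subgroup $\Gamma_S^P$ and its conjugate $\hat{K}\Gamma_S^P\hat{K}$. With respect to $\Gamma_S^P$ the sums (\ref{gl_14}), (\ref{gl_15}) genuinely are double-coset operators, the representatives of Lemma~\ref{lemma_7} a)--d) form $\Gamma_S^P$-double cosets and those of e)--f) form $\hat{K}\Gamma_S^P\hat{K}$-double cosets, and after adjusting representatives by the Chinese remainder theorem (off-diagonal entries divisible by $p^2$ resp.\ $q^2$) the paper verifies the required commutations by explicit computation in these parabolic Hecke algebras. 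To complete your argument you must either carry out that computation, or fall back on your ``reserve'' route and verify the Maaß relation (\ref{gl_11}) for $g=f\underset{k}\mid\Tcal_S(q)$ directly from the coset list of Lemma~\ref{lemma_7}; the operator-theoretic shortcut through the commutativity of $(\Gamma_S,\Delta_S)$ is not available.
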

\begin{proof}
We apply Corollary \ref{corollary_1} and Theorem \ref{theorem_2}. The claim follows from commutation relations in Hecke algebras for primes $p, q, p\neq q$, by embedding them into the Hecke algebras with respect to minmal parabolic subgroups (cf. [15]). Therefore we choose the entries of the off-diagonal elements of matrices in Lemma \ref{lemma_7} divisible by $p^2$ and of the double cosets in (\ref{gl_14}), (\ref{gl_15}) by $q^2$. First of all we interprete (\ref{gl_14}) and (\ref{gl_15}) as Hecke operators with respect to $\Gamma _S^P$ as well as $\Gamma _S^P\diag (1, q, \ldots ,q, q^2)\Gamma _S^P$, $\Gamma _S^P\diag (q, 1, q, \ldots , q, q^2, q)\Gamma _S^P$, $ \Gamma _S^P (q^2, q, \ldots , q, 1)\Gamma _S^P$ and
 $\Gamma _S^P (q, q^2, q, \ldots , q, 1, q) \Gamma _S^P$ in Lemma \ref{lemma_7} a)-d).  We can show that they commute. Then we interprete (\ref{gl_14}) and (\ref{gl_15}) as Hecke operators with respect to
\begin{gather*}
\hat{K} \Gamma _S^P\hat{K} = \left\{ \tilde{K}_{\mu} M_{\lambda}; \; \mu\in\Z ^n, \; \lambda\in\Z ^{n+2}\right\}.
\end{gather*}
One calculates that the sum of these double cosets commutes with the sum of the $\hat{K}\Gamma _S^P\hat{K}$ double cosets of the matrices arising in Lemma \ref{lemma_7} e), f).
\end{proof}
As an application we obtain
\begin{corollary}\label{corollary_3} %% Corollary 3
Let $(\ref{gl_1})$ and $k>n+2$ be even. Then
\[
E_{k,S}\in\Mcal _k^* (\Gamma _S).
\]
\end{corollary}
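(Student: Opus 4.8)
The plan is to manufacture, inside the Maaß space, a single form that meets both hypotheses of Theorem \ref{theorem_1}, so that the characterization there forces it to equal $E_{k,S}$. First I would take $E^M$, the Maaß lift of the first Fourier--Jacobi coefficient of $E_{k,S}$ already used in the proof of Corollary \ref{corollary_1}. By construction $E^M\in\Mcal_k^*(\Gamma_S)$, it is not a cusp form, and $\alpha_{E^M}(0)=\alpha_{E_{k,S}}(0)=1$; producing such a non-cuspidal Maaß form is unavoidable, since $E_{k,S}$ is itself non-cuspidal. As any $f\in\Mcal_k^*(\Gamma_S)$ with $\alpha_f(0)=0$ has vanishing $\phi$-image by $(\ref{gl_12})$ and hence is a cusp form by Lemma \ref{lemma_2}, the constant coefficient splits off and
\[
\Mcal_k^*(\Gamma_S)=\C\,E^M\oplus\bigl(\Mcal_k^*(\Gamma_S)\cap\Ccal_k(\Gamma_S)\bigr),
\]
the coordinate on the first summand being $\alpha(0)$.

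Next I would fix a prime $q\nmid\det S$ and run the Hecke argument on this decomposition. By Corollary \ref{corollary_2} the operator $\Tcal_S(q)$ preserves $\Mcal_k^*(\Gamma_S)$, and because it sends cusp forms to cusp forms it preserves the second summand; on the one-dimensional quotient it acts by the scalar $\rho_k$ of Lemma \ref{lemma_5}, which governs the constant Fourier coefficient. Hence
\[
E^M\underset{k}\mid\Tcal_S(q)=\rho_k E^M+c,\qquad c\in\Mcal_k^*(\Gamma_S)\cap\Ccal_k(\Gamma_S).
\]
The hard part --- and the whole point of the estimates in this section --- is to remove the cuspidal error term $c$. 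The key is that $\rho_k$ is not an eigenvalue of $\Tcal_S(q)$ on cusp forms: every cuspidal eigenvalue $\mu$ obeys $|\mu|\le q^{-k}\rho_0$ by Lemma \ref{lemma_6} (with $r=q^2$), while the same computation that yields the contradiction in Theorem \ref{theorem_1} gives $\rho_k>q^{-k}\rho_0$ for $k>n+2$. Therefore $\Tcal_S(q)-\rho_k$ is invertible on the finite-dimensional space $\Mcal_k^*(\Gamma_S)\cap\Ccal_k(\Gamma_S)$, so there is a unique cusp form $w$ in it with $w\underset{k}\mid\Tcal_S(q)-\rho_k w=-c$, and then $\widetilde E:=E^M+w$ satisfies
\[
\widetilde E\underset{k}\mid\Tcal_S(q)=\rho_k\widetilde E,\qquad \widetilde E\in\Mcal_k^*(\Gamma_S),\qquad \alpha_{\widetilde E}(0)=1.
\]

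Finally I would feed $\widetilde E$ into Theorem \ref{theorem_1}. Since $\widetilde E$ lies in the Maaß space with $\alpha_{\widetilde E}(0)=1$, relation $(\ref{gl_12})$ gives $\widetilde E\underset{k}\mid R_K\underset{k}\mid\phi=E_{k,ell}$ for all admissible $K$; together with the eigenform property just established (eigenvalue $\rho=\rho_k$) this is exactly what Theorem \ref{theorem_1} requires, so $\widetilde E=E_{k,S}$. As $\widetilde E\in\Mcal_k^*(\Gamma_S)$, the claim $E_{k,S}\in\Mcal_k^*(\Gamma_S)$ follows. The decisive obstacle is thus the passage in the second paragraph: upgrading the mere invariance of the Maaß space under $\Tcal_S(q)$ (Corollary \ref{corollary_2}) to an honest $\Tcal_S(q)$-eigenform within it, which succeeds precisely because the Ramanujan-type bound of Lemma \ref{lemma_6} and the hypothesis $k>n+2$ separate the Eisenstein eigenvalue $\rho_k$ from the entire cuspidal spectrum.
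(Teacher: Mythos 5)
Your proposal is correct and follows essentially the same route as the paper: both take the Maaß lift of the first Fourier--Jacobi coefficient of $E_{k,S}$ as a non-cuspidal element of $\Mcal_k^*(\Gamma_S)$, use the invariance from Corollary \ref{corollary_2} to obtain a $\Tcal_S(q)$-eigenform with nonvanishing constant coefficient, and conclude via (\ref{gl_12}) and Theorem \ref{theorem_1}. Your second paragraph --- separating $\rho_k$ from the cuspidal spectrum by Lemma \ref{lemma_6} together with the estimate from the proof of Theorem \ref{theorem_1}, so that $\Tcal_S(q)-\rho_k$ is invertible on $\Mcal_k^*(\Gamma_S)\cap\Ccal_k(\Gamma_S)$ --- is in fact the justification the paper leaves implicit in the phrase ``let $f$ be an eigenform'', since without that spectral separation an eigenform with $\alpha_f(0)\neq 0$ need not exist in an invariant subspace.
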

\begin{proof}
$\Mcal _k^* (\Gamma _S)$ contains a non-cusp form, e.g. the Maaß lift of the first Fourier-Jacobi coefficient of $E_{k,S}$. Let $f$ be an eigenform under $\Tcal _S (q)$ for some prime $q\nmid \det S$. Then (\ref{gl_12}) yields
\[
f\underset{k}\mid R_K \underset{k}\mid \phi (\tau )=\alpha _f (0) E_{k, ell} (\tau )
\]
for all $K\in O^+ (S_1; \Q), \; K=(u, \ast, \ldots , \ast ), \; u\in L_0^{\#}, \; u\geq 0, \; \epsilon (u)=1$.
Thus $f = E_{k, S}$ follows from Theorem \ref{theorem_1}.
\end{proof}

\begin{remark}\label{remark_2} %% Remark 2
a) \; Any $f\in\Mcal ^*_k (\Gamma _S)$ is also invariant under $R_{\hat{K}}$ with $\hat{K}$ in (4).\\
b) \; The same proof holds for the Maaß space in $\Mcal _k (\Gamma )$ for an arbitrary subgroup $\Gamma , \Gamma _S\subseteq \Gamma \subseteq O^+ (S_1; \Z )$. 
\end{remark}

\section{Even unimodular lattices}
\label{sect_5}
If (\ref{gl_1}) holds with $\det S=1$ we know from [16] that the map
\begin{gather*}\tag{16}\label{gl_16}
\Mcal _k^* (\Gamma _S)\to\Mcal _{k-n/2}(\Gamma ), \quad f\mapsto f^* (\tau )=\sum _{l=0}^{\infty}\alpha _f \left( (l,  0,  1)^{tr}\right) e^{2\pi il\tau},
\end{gather*}
is an isomorphism, where $\Mcal _k (\Gamma )$ stands for the space of elliptic modular forms of weight $k$.  We use the notation of Bernoulli numbers $B_k$ from [14]. 
\begin{corollary}\label{corollary_4} %% Corollary 4
Let $(\ref{gl_1})$ hold with $\det S=1$ and $k=\frac{n}{2}$ or $k>\frac{n}{2}+2$ be even. Then there exists an $F_{k, S}\in\Mcal _k^* (\Gamma _S)$ satisfying
\[
F_{k, S}^* (\tau) = -\frac{2k}{B_k} E_{k-n/2, ell} (\tau ).
\]
the Fourier coefficients $\alpha _k (\lambda ), \lambda\in L_0^{\#},$ of $F_{k, S}$ are given by
\begin{gather*}\tag{17}\label{gl_17}
\alpha _k(\lambda )= 
\begin{cases}
 1, \quad & \text{if} \; \lambda =0,  \\
 -\frac{2k}{B_k}\sigma _{k-1}(\epsilon (\lambda )), \quad & \text{if}\;  \lambda\neq 0, S_0[\lambda ]=0,  \\
 \frac{2k(2k-n)}{B_k B_{k-n/2}} \sum _{d\mid \epsilon(\lambda )}d^{k-1}\sigma _{k-1-n/2} (S_0 [\lambda ]/2d), \quad &  \text{if}\; S_0[\lambda ]>0, k>\frac{n}{2},\\
 0, \quad & \text{if} \; S_0[\lambda ]>0, \; k=\frac{n}{2}.
\end{cases}
\end{gather*}
\end{corollary}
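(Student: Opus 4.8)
The plan is to produce $F_{k,S}$ directly from the isomorphism (\ref{gl_16}) and then to extract its Fourier coefficients from the Maaß relation (\ref{gl_11}) together with (\ref{gl_12}). For existence, note that $\det S=1$ with $S$ even makes $S$ even unimodular, so $n\equiv0\pmod8$ and $k-n/2$ is even. The hypothesis then guarantees that $-\frac{2k}{B_k}E_{k-n/2,ell}$ lies in $\Mcal_{k-n/2}(\Gamma)$: for $k=n/2$ it is the nonzero constant $-\frac{2k}{B_k}$ (as $E_{0,ell}\equiv1$), and for $k>n/2+2$ it is $-\frac{2k}{B_k}$ times the holomorphic elliptic Eisenstein series of even weight $k-n/2\geq4$, the borderline weight $2$ (i.e.\ $k=n/2+2$) being exactly what the hypothesis excludes. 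Since (\ref{gl_16}) is an isomorphism, I take $F_{k,S}\in\Mcal_k^*(\Gamma_S)$ to be its unique preimage, which gives $F_{k,S}^*(\tau)=-\frac{2k}{B_k}E_{k-n/2,ell}(\tau)$ at once.

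To compute the coefficients I would use the structure of the Maaß space. By (\ref{gl_11}) the coefficient at any $0\neq\lambda$ is a divisor sum over $d\mid\epsilon(\lambda)$ of $d^{k-1}$ times coefficients of \emph{primitive} vectors ($\epsilon=1$), and a primitive coefficient depends on $\lambda$ only through the norm $S_0[\lambda]$. Since $S$ is even unimodular, every norm is a nonnegative even integer $2l$, realised by the primitive vector $(l,0,1)^{tr}$, so the slice recorded by (\ref{gl_16}) already determines $\alpha_{F_{k,S}}$ on all primitive vectors. Writing the elliptic Eisenstein series (cf.\ [14]) as $E_{k',ell}=1-\frac{2k'}{B_{k'}}\sum_{l\geq1}\sigma_{k'-1}(l)q^l$ with $k'=k-n/2$ and $q=e^{2\pi i\tau}$, I read off from $F_{k,S}^*$ that the primitive coefficient of norm $0$ equals $-\frac{2k}{B_k}$ and that of norm $2l>0$ equals $\frac{2k(2k-n)}{B_kB_{k-n/2}}\sigma_{k-1-n/2}(l)$, the latter vanishing when $k=n/2$.

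Substituting these primitive values into (\ref{gl_11}) yields the three nonzero cases of (\ref{gl_17}) after a routine divisor-sum rearrangement: for $\lambda\neq0$ with $S_0[\lambda]=0$ every argument is $0$ and the sum collapses to $-\frac{2k}{B_k}\sigma_{k-1}(\epsilon(\lambda))$; for $S_0[\lambda]>0$ with $k>n/2$ one obtains the stated double sum; and for $S_0[\lambda]>0$ with $k=n/2$ all contributions vanish. The constant term $\lambda=0$ is invisible to the slice in (\ref{gl_16}) and must be handled separately: applying (\ref{gl_12}) to a primitive isotropic $\mu$ gives $\frac{-2k}{B_k}\alpha_{F_{k,S}}(0)=\alpha_{F_{k,S}}(\mu)=-\frac{2k}{B_k}$, whence $\alpha_{F_{k,S}}(0)=1$.

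I expect the conceptual core, not the arithmetic, to be the main point: that the single slice $\{(l,0,1)^{tr}\}$ in (\ref{gl_16}) really captures every primitive coefficient, i.e.\ that two primitive vectors of equal norm carry the same coefficient. This is the transitivity of the discriminant-kernel orthogonal group on primitive vectors of fixed norm, where the even unimodularity of the lattice is essential, and it is exactly what underlies (\ref{gl_16}) being an isomorphism. Isolating the constant term through (\ref{gl_12}) is the other delicate point; the remaining divisor-sum bookkeeping to match (\ref{gl_17}) is routine.
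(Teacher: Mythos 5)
Your overall strategy is the right one, and it is essentially the paper's own: the paper states Corollary \ref{corollary_4} with no explicit proof, treating it as an immediate consequence of the isomorphism (\ref{gl_16}) together with the Maaß relation (\ref{gl_11}) and (\ref{gl_12}), which is exactly your plan (your discussion of why the hypothesis on $k$ puts $-\frac{2k}{B_k}E_{k-n/2,ell}$ in $\Mcal_{k-n/2}(\Gamma)$, with the borderline weight $2$ excluded, and your use of (\ref{gl_12}) for $\alpha_k(0)=1$ are both correct). However, the step you single out as the ``conceptual core'' is false as stated. The integral orthogonal group of $S_0$ does \emph{not} act transitively on primitive vectors of fixed norm in general: for $n=16$ one has $L_0\cong U\oplus(-E_8\oplus -E_8)\cong U\oplus(-D_{16}^{+})$, and for a primitive isotropic $v$ the isomorphism class of $v^{\perp}/\Z v$ (either $-E_8\oplus-E_8$ or $-D_{16}^{+}$) is an orbit invariant, so there are at least two orbits of primitive isotropic vectors. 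Fortunately no transitivity is needed; membership in the Maaß space does the work by itself. From (\ref{gl_11}) with $\lambda=(m,\mu,l)^{tr}$ you reach coefficients at $(1,\mu/d,lm/d^2)^{tr}$; invariance under $R_{\tilde K_\nu}\in\Gamma_S$ (legitimate here because $S_1$ is unimodular, so the discriminant-kernel condition is vacuous) gives $\alpha_f\bigl((1,\mu',l')^{tr}\bigr)=\alpha_f\bigl((1,0,l'-\tfrac12 S[\mu'])^{tr}\bigr)$; and a second application of (\ref{gl_11}), to the primitive vector $(N,0,1)^{tr}$, gives $\alpha_f\bigl((1,0,N)^{tr}\bigr)=\alpha_f\bigl((N,0,1)^{tr}\bigr)$, which is precisely the $N$-th coefficient of $f^*$. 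So the reduction to the slice recorded by (\ref{gl_16}) is forced by (\ref{gl_11}) itself, not by lattice geometry.

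Second, your ``routine divisor-sum rearrangement'' does not produce the printed formula, and this is exactly the point you should have checked. The vector $(1,\mu/d,lm/d^2)^{tr}$ appearing in (\ref{gl_11}) has norm $S_0[\lambda]/d^{2}$, so carrying out your substitution yields, in the anisotropic case,
\[
\alpha_k(\lambda)=\frac{2k(2k-n)}{B_kB_{k-n/2}}\sum_{d\mid\epsilon(\lambda)}d^{k-1}\,\sigma_{k-1-n/2}\bigl(S_0[\lambda]/(2d^{2})\bigr),
\]
with $2d^{2}$, not $2d$ as in (\ref{gl_17}). The two expressions genuinely differ: for $\epsilon(\lambda)=p$ prime and $S_0[\lambda]=2p^{2}$ they differ by $p^{2k-n/2-2}$, and no re-indexing of the divisor sum converts one into the other. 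The $2d^{2}$ version is the one consistent with the paper's own Maaß condition (\ref{gl_11}) (which scales the norm by $d^{-2}$) and with the classical Maaß/Gritsenko lift, so (\ref{gl_17}) as printed appears to contain a typo; but a proof that asserts it obtains the stated double sum, while its own computation produces a different one, has a genuine gap at precisely this step. Your isotropic case (which collapses to $-\frac{2k}{B_k}\sigma_{k-1}(\epsilon(\lambda))$ because all the primitive vectors involved have norm $0$) and your determination of the constant term via (\ref{gl_12}) are correct once the reduction above replaces the transitivity argument.
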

Let $\Tcal (m)$ stand for the elliptic Hecke operator without any normalizing factor.
\begin{theorem}\label{theorem_3} %% Theorem 3
Let $(\ref{gl_1})$ hold with $\det S=1$. Then $f\in\Mcal _k^* (\Gamma _S)$ is an eigenform under $\Tcal _S (q)$ for a prime $q\nmid \det S$ if and only if $f^*$ is an eigenform under $\Tcal (q^2)$. 
\end{theorem}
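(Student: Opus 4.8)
The plan is to show that the Maaß lift isomorphism (\ref{gl_16}) transports $\Tcal_S(q)$ to a nonzero scalar multiple of $\Tcal(q^2)$; the eigenform equivalence is then a formal consequence. By Corollary \ref{corollary_2} the Maaß space $\Mcal_k^*(\Gamma_S)$ is stable under $\Tcal_S(q)$, so conjugating by the isomorphism $f\mapsto f^*$ produces a well-defined linear endomorphism $\widetilde{\Tcal}$ of $\Mcal_{k-n/2}(\Gamma)$ determined by $\widetilde{\Tcal}(f^*)=(f\underset{k}\mid\Tcal_S(q))^*$. Since $f\mapsto f^*$ is bijective, $f$ is a $\Tcal_S(q)$-eigenform exactly when $f^*$ is a $\widetilde{\Tcal}$-eigenform; hence it suffices to prove $\widetilde{\Tcal}=c\,\Tcal(q^2)$ for some fixed $c\in\C\setminus\{0\}$, after which an eigenvector of one operator is an eigenvector of the other.

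To identify $\widetilde{\Tcal}$ I would compute the Fourier coefficients of $f\underset{k}\mid\Tcal_S(q)$ at the indices $(l,0,1)^{tr}$, $l\geq 0$, since by (\ref{gl_16}) these already determine the elliptic image. Writing $f\underset{k}\mid\Tcal_S(q)=\sum_M f\underset{k}\mid M$ over the representatives of $\Gamma_S\backslash\Tcal_S(q)$ from Lemma \ref{lemma_7} and inserting the Fourier expansion (\ref{gl_6}), each $M$ contributes a copy of the series in which the index is rescaled by the diagonal block of $M$ and shifted by its $M_{\lambda^*}$, $K_\nu$, $\tilde{K}_\mu$ parts; summing the shifts over the residues modulo $q$ sieves out those coefficients whose index satisfies the divisibility forced by $M$. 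I would isolate, among the families a)--f) of Lemma \ref{lemma_7}, exactly those whose diagonal scaling can land on an index of the form $(l,0,1)^{tr}$, and collect the resulting linear combination of coefficients $\alpha_f(\mu)$.

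The decisive step is to rewrite this combination purely in terms of the generating coefficients $c(l')=\alpha_f\big((l',0,1)^{tr}\big)$ of $f^*$. Here the Maaß relations, in the equivalent form of the $p$-Maaß condition (\ref{gl_13}) from Theorem \ref{theorem_2}, are used to replace every $\alpha_f(\mu)$ occurring above by its expression in the $c(l')$. After this reduction the coefficient of $f\underset{k}\mid\Tcal_S(q)$ at $(l,0,1)^{tr}$ should assemble into the classical formula for the $l$-th coefficient of $\Tcal(q^2)f^*$ in weight $k-n/2$, i.e. a combination of $c(q^2l)$, $c(l)$ and $c(l/q^2)$ with powers of $q$ prescribed by the weight and by the divisibility of $l$ by $q$. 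This yields $\widetilde{\Tcal}=c\,\Tcal(q^2)$ and finishes the argument; as a consistency check, under the interpretation of Remark \ref{remark_1} b) the double coset $\Tcal_S(q)$ corresponds on the degree-$2$ side to $\Gamma\diag(1,q,q^2,q)\Gamma$, the Hecke operator classically matched with the elliptic $\Tcal(q^2)$ under a Saito--Kurokawa type lift.

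The main obstacle is the bookkeeping in the middle step. The representatives of types c)--f) carry the blocks $K_\nu$, $\tilde{K}_\mu$ and $M_{\lambda^*}$ together with the congruence conditions $\frac{1}{2}S[\mu]\equiv 0$, $\mu^{tr}S\equiv(\zeta_1,\dots,\zeta_j,0,\dots,0)$ and $r\zeta_j\equiv -1 \pmod{q}$, so tracking their action on the single index $(l,0,1)^{tr}$ and summing correctly over all admissible residue classes modulo $q$ is delicate; this is precisely where the hypothesis $q\nmid\det S$ is needed, to count those residues and to invert $S$ modulo $q$. I expect, as in the paramodular case $n=1$ of [11], that the contributions of the ``upper'' and ``lower'' families combine so that only the terms $c(q^2l)$, $c(l)$, $c(l/q^2)$ survive with the correct weighting, and verifying that no spurious terms remain is the genuinely technical point.
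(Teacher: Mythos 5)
Your plan follows the same route as the paper: compute the Fourier coefficients of $f\underset{k}\mid\Tcal _S(q)$ at the indices $(l,0,1)^{tr}$ from the coset representatives of Lemma \ref{lemma_7}, use the Maaß relations to rewrite everything in terms of the coefficients $c(l')=\alpha _f\bigl((l',0,1)^{tr}\bigr)$ of $f^*$, and then transport through the isomorphism (\ref{gl_16}). However, your decisive intermediate claim is false: the transported operator $\widetilde{\Tcal}$ is \emph{not} a scalar multiple of $\Tcal (q^2)$. What the computation actually yields (and what the paper proves) is the affine relation
\[
(f\underset{k}\mid\Tcal _S(q))^* \;=\; f^*\underset{k-n/2}\mid \Tcal (q^2)\;+\;\left(q^{n+1}+q^{n}-q^{n/2}+N'\right)q^{-k}\, f^*,
\]
i.e. $\widetilde{\Tcal}=\Tcal (q^2)+c\,\mathrm{Id}$ with $c\neq 0$. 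The reason a pure rescaling is impossible is visible in the middle cosets c)--f) of Lemma \ref{lemma_7}: family f) contributes $N'q^{-k}c(l)$ and the count of residues $\nu \bmod q$ with $\tfrac12 S[\nu ]\equiv -l \bmod q$ contributes further multiples of $c(l)$ \emph{for every} $l$, whereas the classical $\Tcal (q^2)$ produces a $c(l)$-term only when $q\mid l$. Since $\mathrm{Id}$ does not lie in the line spanned by $\Tcal (q^2)$, the identity $\widetilde{\Tcal}=c\,\Tcal (q^2)$ you set out to prove cannot hold, and the "verification that no spurious terms remain" would fail: the spurious diagonal term is really there.

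The damage is repairable in one line: eigenvectors of $\Tcal (q^2)+c\,\mathrm{Id}$ are exactly the eigenvectors of $\Tcal (q^2)$, so replacing your target identity by the affine one still gives the equivalence, and this is precisely how the paper concludes. But as written, your proof commits to a false operator identity, so the plan would not go through without this correction. A small additional slip: your consistency check misquotes Remark \ref{remark_1} b) --- there $\Tcal _S(q)$ corresponds to $\Gamma\diag (1,1,q,q)\Gamma$, while it is $\Tcal _S^*(q)$ that corresponds to $\Gamma\diag (1,q,q^2,q)\Gamma$; relying on that misattribution may be what led you to expect a clean proportionality with $\Tcal (q^2)$ in the first place.
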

\begin{proof}
Let $g=f\underset{k}\mid \Tcal _S (q)$ and set $\alpha _f(\lambda )=0$ for $\lambda\not\in L_0^{\#}, \; \lambda=\begin{pmatrix}
l \\ \mu \\ m
\end{pmatrix}$. It follows from Lemma \ref{lemma_7} that
\begin{gather*}
\alpha _g (\lambda )=  \alpha _f \left( \frac{1}{q}\lambda\right)+q^{n+2-2k}\alpha _f(q\lambda )+ \sum _K q^{1-k}\alpha _f\left( \frac{1}{q}K\lambda \right), 
\end{gather*}
where we sum over $K$ in Lemma \ref{lemma_7} c) - f). If $\lambda = (l,  0,  1) ^{tr}$, note that
\[
\begin{pmatrix}
q & 0 & 0 \\ 0 & I & 0 \\ 0 & 0 & \frac{1}{q}
\end{pmatrix} \tilde{K} _{\mu}\lambda\not\in L_0^{\#}, \; \lambda ^* = \begin{pmatrix}
q & 0 & 0 \\ 0 & I & 0 \\ 0 & 0 & \frac{1}{q}
\end{pmatrix} \tilde{K}_{\mu _j}K_{e_j} \lambda\in L_0^{\#}
\]
with $\epsilon (S_0\lambda ^*)=1$, hence $\alpha _g(\lambda ^*) = \alpha _f(\lambda )$. If $N'$ denotes the number of right cosets in Lemma \ref{lemma_7} f) we obtain 
\[
\alpha _g(\lambda )=q^{n+2-2k}\alpha _f \begin{pmatrix}
ql \\ 0 \\ q
\end{pmatrix}+q^{1-k}\sum _{\nu :\Z ^n/q\Z ^n}\alpha _f\begin{pmatrix}
(l+\frac{1}{2}S[\nu ])/q \\ \nu \\ q
\end{pmatrix}+ N'q^{-k}\alpha _f\begin{pmatrix}
l \\ 0 \\ 1
\end{pmatrix}.
\]
 Observe that (cf. [14]).
\[
\# \{\nu\mod q; \; \frac{1}{2}S[\nu ]\equiv -l\mod q\}=
\begin{cases}
   q^{n-1}-q^{n/2-1} \quad & \text{if}\; l\not\equiv 0\;\text{mod}\; q,\\
   q^{n-1}-q^{n/2-1}+q^{n/2} \quad & \text{if}\; l\equiv 0 \;\text{mod}\; q. \\
 \end{cases}
\]
Then the Maaß condition (\ref{gl_11}) leads to
\begin{align*}
\alpha _g (\lambda )= & q^{n+2-2k}\alpha _f\begin{pmatrix}
q^2l \\ 0 \\ 1
\end{pmatrix}+\alpha _f \begin{pmatrix}
l/q^2 \\ 0 \\ 1
\end{pmatrix}+q^{-k}\left( q^{n+1}+q^{n}-q^{n/2}+N'\right)\alpha _f\begin{pmatrix}
l \\ 0 \\ 1
\end{pmatrix} \\
& + \begin{cases}
 0 \; & \text{if} \; q\nmid l \\
 q^{\frac{n}{2}+1-k}\alpha _f\begin{pmatrix}
l \\ 0 \\ 1
\end{pmatrix} \; & \text{if} \; q\mid l.
\end{cases}
\end{align*}
The classical Hecke theory therefore shows that
\[
g^* = f^*\underset{k-n/2}\mid \Tcal (q^2) + \left(q^{n+1}+q^{n}-q^{n/2}+N'\right) q^{-k} f^*.
\]
This yields the claim as (\ref{gl_16}) is an isomorphism. 
\end{proof}
As $E_{k, ell} (\tau )$ is an eigenform under all Hecke operators, Theorem \ref{theorem_1} and Theorem \ref{theorem_3} imply
\begin{corollary}\label{corollary_5} %% Corollary 5
Let $(\ref{gl_1})$ hold with $\det S=1$ and $k>n+2$ be even. Then
\[
E_{k, S}=F_{k, S}\in\Mcal _k^* (\Gamma _S).
\]
\end{corollary}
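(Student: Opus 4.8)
The plan is to pin down $F_{k,S}$ as exactly the form characterized by Theorem \ref{theorem_1}, so that the uniqueness there forces $E_{k,S}=F_{k,S}$. First I would recall from Corollary \ref{corollary_4} that $F_{k,S}\in\Mcal_k^*(\Gamma_S)$, that its constant Fourier coefficient is $\alpha_k(0)=1$ by (\ref{gl_17}), and that $F_{k,S}^*(\tau)=-\frac{2k}{B_k}E_{k-n/2,ell}(\tau)$. Since we assume $\det S=1$, every prime $q$ satisfies $q\nmid\det S$, so the Hecke-theoretic results of Section \ref{sect_5} are available at every prime.

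The key step is to transport the classical eigenform property to the orthogonal side. The elliptic Eisenstein series $E_{k-n/2,ell}$ is a simultaneous eigenform under all classical Hecke operators, so in particular $F_{k,S}^*$ is an eigenform under $\Tcal(q^2)$ for every prime $q$. By Theorem \ref{theorem_3}, together with the fact that (\ref{gl_16}) is an isomorphism, this is equivalent to $F_{k,S}$ being an eigenform under $\Tcal_S(q)$; hence $F_{k,S}\underset{k}\mid\Tcal_S(q)=\rho F_{k,S}$ holds for some $\rho\in\C$ at every prime $q\nmid\det S$.

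It then remains to verify the hypotheses of Theorem \ref{theorem_1} for $f=F_{k,S}$. Membership in the Maaß space combined with $\alpha_{F_{k,S}}(0)=1$ plugged into (\ref{gl_12}) gives $F_{k,S}\underset{k}\mid R_K\underset{k}\mid\phi(\tau)=E_{k,ell}(\tau)$ for all admissible $K=(u,\ldots,v)\in O^+(S_1;\Q)$ with $u\in L_0^{\#}$, $u\geq0$, $\epsilon(u)=1$; here the normalization $\alpha_f(0)=1$ is precisely what produces the unnormalized $E_{k,ell}$ demanded by the theorem rather than a scalar multiple. With the boundary condition and the eigenform relation both in hand, Theorem \ref{theorem_1} yields $F_{k,S}=E_{k,S}$, and since $F_{k,S}$ lies in $\Mcal_k^*(\Gamma_S)$ the assertion follows.

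I do not expect a computational obstacle; the difficulty is structural, namely correctly routing the eigenform property through the two equivalences---Theorem \ref{theorem_3} connecting $\Tcal_S(q)$-eigenforms with $\Tcal(q^2)$-eigenforms, and Theorem \ref{theorem_1} characterizing $E_{k,S}$ by one eigenvalue equation plus the $\phi$-operator condition. The single point needing care is the matching of normalizations, so that the $\phi$-image is exactly $E_{k,ell}$, which is guaranteed by the value $\alpha_k(0)=1$ recorded in (\ref{gl_17}).
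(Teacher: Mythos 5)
Your proposal is correct and follows essentially the same route as the paper, which deduces the corollary from the fact that $E_{k-n/2, ell}$ is an eigenform under all Hecke operators, transfers this to $F_{k,S}$ via Theorem \ref{theorem_3} (using $\det S = 1$ so every prime is admissible), and then invokes the characterization in Theorem \ref{theorem_1}, with the $\phi$-operator hypothesis supplied by (\ref{gl_12}) and the normalization $\alpha_k(0)=1$ from (\ref{gl_17}). Your write-up simply makes explicit the details the paper compresses into a single sentence.
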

\begin{remark}\label{remark_3} %% Remark 3
a) \; Corollary 5 was already proved by Woitalla [24].  For the general version compare [20].\\
b) \; It is conjectured that the modular forms $F_{k, S}, \; k=\frac{n}{2}$ or $\frac{n}{2}+2<k\leq n+2$ arise from $E_{k,S}$ via the Hecke trick.
If $k=\frac{n}{2}$ we obtain a singular modular form in $\Mcal _{n/2}^* (\Gamma _S)$ (cf. [13]). \\
c)\; It is now clear the modular forms $E_k$ in [2] defined as Maaß lifts are actually the Eisenstein series for $\Gamma _S$, where $S$ is the $E_8$ lattice \\
d)\; In [9] the authors investigated the relation $E_4^2=E_8$ for Hermitian Eisenstein series of degree 2. If we consider such a relation for $\det S=1$, a comparison of Theorem \ref{theorem_1} and (\ref{gl_16}) shows that it is only possible in the case $n=8$. In this case the relations
\[
F_{4, S}^2=F_{8, S} \quad \text{und} \quad F_{4,S}\cdot F_{10, S}=E_{14, S}
\]
are consequences of [2]. Then Corollary 4 yields
\[
2\sigma _3 \left( \frac{1}{2} S_0[\lambda ]\right) =\sum _{\substack{\nu , \mu\in\Z ^{10}\backslash \{ 0\}, \nu\geq 0, \mu\geq 0 \\ \nu +\mu =\lambda , \; S_0 [\nu ]=S_0 [\mu ] =0}} \sigma _3 \left( \epsilon (\nu  )\right) \sigma _3 \left( \epsilon (\mu )\right)
\]
for all $\lambda\in\Z ^{10}, \; \lambda >0$.
\end{remark}
The authors thank E. Freitag for pointing out Remark 3d) to them.
\vspace{6ex}

%============================================

\bibliography{bibliography_krieg_2021} 
\bibliographystyle{plain}

\end{document}